\documentclass[11pt]{amsart}
\usepackage[left=1in, right=1in, top=1.2in, bottom=1.2in]{geometry}
\usepackage[utf8]{inputenc}
 \usepackage{graphicx}
 \usepackage{amssymb}
 \usepackage{comment}
 \usepackage[mathscr]{euscript}
 \usepackage{tikz}
\usetikzlibrary{decorations.pathmorphing, patterns,shapes,arrows,automata,positioning}
\usepackage{amsmath}
\usepackage{amsfonts}
\usepackage{mathtools}
\usepackage{amsthm}
\usepackage{hyperref}
\usepackage{enumerate}
\usepackage{indentfirst}
\usepackage{tikz}
\usetikzlibrary{arrows,automata,positioning}
\usepackage{color}

\newtheorem{thm}{Theorem}

\newtheorem{lem}[thm]{Lemma}
\newtheorem{fact}[thm]{Fact}
\newtheorem{cor}[thm]{Corollary}

\newtheorem*{thm*}{Theorem}
\newtheorem{innercustomthm}{Theorem}
\newenvironment{customthm}[1]
  {\renewcommand\theinnercustomthm{#1}\innercustomthm}
  {\endinnercustomthm}

\numberwithin{thm}{section}

\newtheorem{conjecture}[thm]{Conjecture}

\theoremstyle{definition}
\newtheorem{defn}[thm]{Definition}

\newtheorem{rmk}[thm]{Remark}
\newcommand{\N}{\mathbb{N}}
\newcommand{\R}{\mathbb{R}}

\newcommand{\Z}{\mathbb{Z}}

\newcommand{\calI}{\mathcal{I}}
\newcommand{\calE}{\mathcal{E}}
\newcommand{\calX}{\mathcal{X}}
\newcommand{\calN}{\mathcal{N}}

\title{Sublinear Michaux-Villemaire over $(\N, +, k^\N)$}

\author{Chris Schulz}
\address{Max Planck Institute for Software Systems \\
Saarland, Germany}
\email{chschulz@mpi-sws.org}

\subjclass[2020]{Primary 03C64 Secondary 03D05}
\keywords{Presburger, Model theory, Definability}

\begin{document}

\begin{abstract}
    In 1996, Michaux and Villemaire showed that any nontrivial expansion of $(\N, +)$ defines new unary sets. We conjecture that the same holds over the structure $(\N, +, k^\N)$, and we prove a partial result in this direction. To do so, we first show that every nonempty sparse $k$-automatic set is a Presburger-definable image of some Cartesian power of $k^\N$. Using this characterization, we prove that if an expansion of $(\N, +, k^\N)$ defines the same unary sets, then every set of sublinear growth definable in the expansion is already definable in $(\N, +, k^\N)$.
\end{abstract}

\maketitle

\section{Introduction}

The question of definability in structures on the natural numbers has received considerable attention in the past several decades. Let $\calN = (\N, \dots)$ be some first-order structure on the natural numbers. It is natural to ask precisely which relations $X \subseteq \N^d$ are definable in $\calN$, i.e. which relations may be defined by a first-order formula in $\calN$. In this paper, as in many others, we focus specifically on $\calN$ of the form $(\N, +, \dots)$, i.e. expansions of \textbf{Presburger arithmetic}. In these structures, the distinction between $\varnothing$-definability and definability with parameters vanishes, because every constant is $\varnothing$-definable in $\calN$.

Presburger arithmetic itself is well-understood: the sets it defines are precisely the semilinear sets (\cite{GS66S}). The following are some examples of well-studied expansions of Presburger arithmetic and their properties:

\begin{itemize}
    \item \textbf{True arithmetic} is the structure $(\N, +, \cdot)$. The definable sets in this structure are called \textbf{arithmetical}, and they include most sets of natural numbers studied in mathematics.
    \item \textbf{$k$-B\"uchi arithmetic} is the structure $(\N, +, V_k)$, where $(x, y) \in V_k$ when $y$ is the largest power of $k$ dividing $x$. It was shown by Bruy\`ere et al. in \cite{BHMV94} that the sets definable in $k$-B\"uchi arithmetic are precisely the \textbf{$k$-automatic} sets, i.e. the sets encoded by some regular language. Another famous result on this structure is the \textbf{Cobham-Semenov theorem}, which states that if $k_1$ and $k_2$ have no nontrivial common powers, then the only sets defined in both $k_1$- and $k_2$-B\"uchi arithmetic are the Presburger-definable sets. This was proven by Cobham in the one-dimensional case (\cite{C69}) and by Semenov in the general case (\cite{Sem77}).
    \item \textbf{$k$-Semenov arithmetic} is the structure $(\N, +, n \mapsto k^n)$. By Semenov (\cite{Sem84}), this structure has a decidable first-order theory.
    \item A common reduct of both $k$-B\"uchi and $k$-Semenov arithmetic is $(\N, +, k^\N)$, where $k^\N$ is the set $\{k^n : n \in \N\}$. B\`es showed in \cite{Bes97} that a set definable in this structure is definable in \textit{any} nontrivial expansion $(\N, +, E)$ where $E$ is $k$-automatic.
    \item We may combine the above predicates in multiple bases. If $k_1$ and $k_2$ have no nontrivial common powers, then the structure $(\N, +, V_{k_1}, k_2^\N)$ defines multiplication and is thus equivalent to true arithmetic in terms of first-order definability (\cite{Bes97}). The structure $(\N, +, k_1^\N, k_2^\N)$ has an undecidable first-order theory (\cite{HS22}), but it does not define multiplication, and indeed no $(\N, +, k_1^\N, \dots, k_r^\N)$ defines multiplication (\cite{Sch25}).
\end{itemize}

In \cite{MV96}, Michaux and Villemaire proved the following result on expansions of Presburger arithmetic.

\begin{fact}[\cite{MV96}, Theorem 5.1]
    Let $E \subseteq \N^d$, and assume that $(\N, +, E)$ and $(\N, +)$ define the same subsets of $\N$. Then $E$ is Presburger-definable.
\end{fact}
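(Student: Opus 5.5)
The plan is to argue by induction on $d$, via a definable criterion for semilinearity of Muchnik type. The case $d=1$ is immediate, since $E$ is then itself a subset of $\N$ definable in $(\N,+,E)$. For $d>1$, I would use the fact that $X\subseteq\N^d$ is Presburger-definable if and only if every section of $X$ obtained by fixing one coordinate is Presburger-definable and $X$ is \emph{locally periodic}. The latter condition should be expressible by a first-order sentence of $(\N,+,X)$ that bounds periods and thresholds uniformly.

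Fix $n\in\N$ and let $E_n=\{\bar x\in\N^{d-1} : (\bar x,n)\in E\}$. This set is definable in $(\N,+,E)$ with the parameter $n$, so every unary set definable in $(\N,+,E_n)$ is definable in $(\N,+,E)$, hence in $(\N,+)$. By the induction hypothesis every $E_n$ is semilinear. It then remains to rule out the case in which all sections are semilinear but $E$ fails local periodicity, and this is the main obstacle.

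To handle it, I would attach to each section $E_n$ a numerical invariant that is definable uniformly in $n$: a minimal period $p(n)$ together with a threshold $t(n)$ beyond which $E_n$ is periodic in each coordinate direction with period $p(n)$. Because the sections are semilinear, such invariants exist. Minimality of the period and threshold is first-order expressible, so the functions $p$ and $t$ are definable in $(\N,+,E)$. Failure of local periodicity for $E$ should correspond to $p$ or $t$ being unbounded in a way that is not eventually linear-periodic in $n$.

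The key step is to turn such irregular growth into a non-semilinear unary set. My first attempt would use the \emph{record set} $R=\{n : p(n)>p(m) \text{ for all } m<n\}$ and its value set $p(R)$. Both are definable subsets of $\N$, so by hypothesis both are semilinear. If $R$ is infinite, then $R$ contains an infinite arithmetic progression, and so does $p(R)$; since $p$ is strictly increasing on $R$, this should force $p$ to be eventually linear along $R$. I would then define an auxiliary unary set such as $\{n+p(n) : n\in R\}$, or $\{t(n)-n\}$ if that works better, whose semilinearity together with that of $R$ and $p(R)$ yields the uniform bounds required for local periodicity. This contradicts the assumption that $E$ fails local periodicity. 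I expect making this final combinatorial step rigorous, with unary sets chosen carefully so that semilinearity genuinely forces uniform periodicity, to be the hardest part of the argument.
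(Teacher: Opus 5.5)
The paper gives no proof of this statement; it is quoted from \cite{MV96}. Judged on its own, your proposal has a genuine gap at the step you flag as the main obstacle. Choosing the auxiliary unary sets more carefully cannot repair it, because the invariants $p$ and $t$ do not carry the information that detects failure of local periodicity.

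Take $d=2$ and $E=\{(x,n)\in\N^2 : n\mid x\}$. Every section is semilinear: $E_0=\{0\}$, $E_n=n\N$ for $n\ge 1$, and $\{n : n\mid x\}$ is finite for $x>0$ and equals $\N$ for $x=0$. Your invariants are $p(n)=n$ and $t(n)=0$ for $n\ge 1$, with $p(0)=t(0)=1$; these are Presburger-definable functions. Hence every unary set definable from $+$ and the graphs of $p$ and $t$ is semilinear. This covers the record set $R$, $p(R)$, $\{n+p(n) : n\in R\}$, and any other choice. Yet $E$ is not semilinear, since $(\N,+,E)$ defines the primes as $\{x\ge 2 : \neg\exists n\,(2\le n<x \wedge (x,n)\in E)\}$.

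So the correspondence you posit, between failure of local periodicity and irregular growth of $p$ or $t$, is false. Here the periods grow perfectly linearly. The non-semilinearity lies in how the periodic patterns of different sections sit relative to one another: the rays $x=jn$ point in infinitely many directions. It can only be extracted by unary sets that quantify over $E$ itself, as the definition of the primes does.

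There is a second, independent problem for $d\ge 3$: the invariants do not even exist there. A semilinear section such as $\{(x,y) : x\le y\}\subseteq\N^2$ is not periodic in the $x$-direction beyond any threshold. Indeed, $(x,y)$ and $(x+p,y)$ have different membership whenever $y-p<x\le y$, and such points occur arbitrarily far out. Muchnik-style local periodicity is position-dependent: the period vector and the neighbourhood depend on the point. It cannot be summarized by one pair $(p(n),t(n))$ per section.

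What the argument needs, and the proposal does not supply, is a mechanism that converts failure of local periodicity directly into a definable unary set that is not ultimately periodic. Your record-set argument at best shows that $p$ restricted to $R$ is Presburger-definable. As the example shows, that is compatible with $E$ being non-semilinear.
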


It is natural to ask what other structures have the same property. So we may say that any structure $\calX = (X, \dots)$ has the \textbf{Michaux-Villemaire property} if for all $E \subseteq X^d$, whenever $(\calX, E)$ defines the same subsets of $X$ as $\calX$ itself, the set $E$ is already definable in $\calX$. Michaux and Villemaire's result thus states that Presburger arithmetic has the Michaux-Villemaire property.

Other structures satisfying this property include $(\N, +, \cdot)$ and in fact any $\calX$ with a definable injection $X^2 \hookrightarrow X$. If we have such an injection, then finite tuples may be encoded by individual elements of $X$, and therefore every higher-arity set is interdefinable with a unary set. On the other hand, consider the structure $(\R, <, +, \cdot, \exp)$. By \cite{Wil96}, this structure is \textbf{o-minimal}, meaning that every subset of $\R$ definable in this structure is definable in $(\R, <)$. Therefore, the structure $(\R, <)$ and intermediate structures such as $(\R, <, +)$ and $(\R, <, +, \cdot)$ fail the Michaux-Villemaire property.

The motivation behind this paper is the following conjecture.

\begin{conjecture}
\label{sparse_definable_mv}
    The structure $(\N, +, k^\N)$ has the Michaux-Villemaire property.
\end{conjecture}

If true, this conjecture would have immediate implications. For instance, Bell, Block Gorman, and Schulz recently proved a dichotomy result for unary $k$-automatic sets.

\begin{fact}[\cite{BBS26}, Theorem 1.1]
\label{bbs_dichotomy}
    Let $X \subseteq \N$ be a $k$-automatic set. Then either $X$ is definable in $(\N, +, k^\N)$, or else $(\N, +, X)$ defines $V_k$ (and hence every $k$-automatic set).
\end{fact}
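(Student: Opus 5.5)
The plan is to reduce everything to the combinatorics of the base-$k$ language of $X$. Fix a deterministic automaton $\mathcal{A}$ reading base-$k$ representations (most significant digit first, say) and accepting exactly $X$, and let $L$ be its language. Recall the classical dichotomy for regular languages. Either $L$ is \emph{bounded}, i.e.\ a finite union of languages of the form
\[ u_0 v_1^* u_1 v_2^* \cdots v_m^* u_m , \]
or $L$ has exponential growth. In the latter case some state of a trimmed automaton carries two distinct cycles. Hence there are words $u, z$ and two distinct words $p \neq q$ of the same length $\ell$ with $u\{p,q\}^* z \subseteq L$. I would split the proof along this dichotomy.

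\textbf{Bounded case.} I would show that each bounded piece $u_0 v_1^* u_1 \cdots v_m^* u_m$ has a value set definable in $(\N,+,k^\N)$; finite unions are then immediate. The value of $u_0 v_1^{n_1}\cdots v_m^{n_m} u_m$ is an explicit sum of terms. Each term is a word value times a power of $k$, or a block value $[v_i]\cdot \frac{k^{|v_i| n_i}-1}{k^{|v_i|}-1}$ times a power of $k$, with exponents given by linear combinations of the $n_i$. So $x$ lies in the set iff there exist $t_1,\dots,t_{m+1}\in k^\N$ satisfying the following:
\begin{itemize}
    \item the $t_j$ mark the block boundaries in the right order;
    \item each ratio $t_{j+1}/t_j$ is a power of $k^{|v_j|}$;
    \item $x$ equals the corresponding linear expression.
\end{itemize}
The expression involves integer constants and the quantities $\frac{t_{j+1}-t_j}{k^{|v_j|}-1}$. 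Multiplying through by $\prod_j (k^{|v_j|}-1)$ clears denominators, so it is Presburger in the $t_j$. Two points need care. The condition ``$s/t\in (k^{a})^\N$ for $s,t\in k^\N$'' must be definable; I would get it from $(k^{a}-1)\mid (s-t)$ together with $t\le s$, which uses only $+$ and constants. Leading zeros are also an issue; I would handle them by normalizing $u_0$ to start with a nonzero digit or be empty.

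\textbf{Exponential case.} Here I must define $V_k$ from $X$ and $+$. First I would replace $p,q$ by $p' = pq$ and $q' = qp$, or by suitable powers of them, so that no nontrivial overlaps occur and both share a common state-loop. The value set
\[ Y=\{[u w z] : w\in\{p,q\}^*\} \subseteq X \]
then encodes arbitrary finite binary strings in blocks of $\ell$ digits. The first subgoal is to carve $Y$, or a large enough uniform piece of it, out of $X$ by a formula in $(\N,+,X)$. I would try the following route. Since the automaton is deterministic, shifting by an appropriate suffix lets me compare $x$ and $x + c\cdot(k^{\ell}-1)^{-1}$-type perturbations, using only $+$ and finitely many constants. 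This should isolate the strings whose every block lies in $\{p,q\}$. With $Y$ in hand, $k^{\ell\N}$ becomes definable as the set of differences between consecutive elements of $Y$ that differ in a single block, suitably normalized by the constant $[p]-[q]$. From this, $k^\N$ follows by taking finite unions of multiples.

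Given $Y$ and $k^\N$, I would define $V_k$ as follows. Strings in $Y$ serve as second-order parameters coding finite subsets of $k^{\ell\N}$ (a block equals $q$ exactly at the coded positions). The ability to quantify over finite sets of powers of $k$ lets one express ``$y$ is the least power of $k$ at which $x$ has a nonzero digit''. One does this by simulating digit-by-digit subtraction, or equivalently by writing $x$ as a sum indexed by such a coded set. This yields $V_k$, and then every $k$-automatic set is definable by \cite{BHMV94}. The main obstacle I expect is the first subgoal of this case: extracting the uniform family $Y$ from $X$ using only addition. $X$ may contain many other elements interfering with any naive definition, and I will likely need a careful pumping and Myhill--Nerode argument choosing $u, z, p, q$ so that membership in $X$ of a few Presburger-related translates certifies the block structure.
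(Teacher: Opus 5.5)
The paper does not prove this statement; it is quoted from \cite{BBS26}. Judged on its own, your plan has a genuine gap: the case split is drawn along the wrong line. Your bounded case is fine; it is essentially the construction in the proof of Theorem \ref{sparse_equals_power_internal}. But in the exponential case you set out to define $V_k$ in $(\N,+,X)$, and that is false in general.

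Take $X=\N$, any infinite Presburger-definable set, or $X=\{n : \exists t\in k^\N\,(kt\le n<(k+1)t)\}$, the numbers whose base-$k$ expansion begins with $10$. Each has a non-sparse language, so that language contains some $u\{p,q\}^*z$. Yet each is definable in $(\N,+,k^\N)$, which does not define $V_k$, because not every $k$-automatic set is sparse-definable. For $X=\N$, the structure $(\N,+,X)$ defines only semilinear sets. As Fact \ref{semenov_char} and Corollary \ref{entropy_dichotomy} make explicit, the first alternative is not the sparse case. It also contains sets of growth $\Theta(n)$, whose languages end in $\Sigma_{\ell,m,c}$-tails.

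So the obstacle you flag, carving $Y$ out of $X$, is not a technicality to be handled by a careful choice of $u,z,p,q$. The condition $u\{p,q\}^*z\subseteq L$ is preserved under enlarging $X$ and holds for $X=\N$. Hence no argument whose only input is such a positive pattern can produce $V_k$.

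What is missing is a usable combinatorial form of ``$X$ is not sparse-definable''. Sets encoded by finite unions of languages $v_1w_1^*\cdots v_rw_r^*\Sigma_{\ell,m,c}$ are sparse-definable. To see this, extend your bounded-case formula by a tail $y<t$ with $t\in k^\N$, $t\equiv 1\pmod{k^\ell-1}$ and $y\equiv -c\pmod m$. So by Fact \ref{semenov_char}, a $k$-automatic set is sparse-definable exactly when it is encoded by a finite union of this shape.

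In the remaining case you must locate a failure of this normal form inside the exponential part of the automaton. Roughly, this is a state lying on two distinct cycles whose residual language is not ultimately governed by length modulo some $\ell$ and value modulo some $m$. Such a witness necessarily involves words outside $L$ as well as words inside it. Only from that two-sided information can one hope to define something like your $Y$, or even $k^\N$, in $(\N,+,X)$.

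This is the real content of \cite{BBS26}, and your proposal does not supply it. Your final step, obtaining $V_k$ from $Y$ and $k^\N$ by coding finite sets of powers of $k$, is plausible. However, it sits downstream of the step that fails.
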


If $(\N, +, k^\N)$ has the Michaux-Villemaire property, then we immediately obtain a generalization of the above to $X \subseteq \N^d$.

In this paper, we do not provide a proof or refutation of the above conjecture. Instead, we obtain the following partial result.

\begin{customthm}{\ref{sublinear_e_definable}}
    Let $k > 1$, and let $\calE$ be an expansion of $(\N, +, k^\N)$ that defines the same unary sets. Let $X \subseteq \N^d$ be definable in $\calE$, and assume that $|X \cap [0, n]^d| \in o(n)$. Then $X$ is definable in $(\N, +, k^\N)$.
\end{customthm}

In particular, letting $\calE = (\N, +, k^\N, X)$, we obtain the Michaux-Villemaire property for $(\N, +, k^\N)$ restricted to expansions by sets with sublinear growth.

While Conjecture \ref{sparse_definable_mv} is suspected to hold, it is also plausible that the sublinear bound given by Theorem \ref{sublinear_e_definable} is tight. The reason relates to Corollary \ref{entropy_dichotomy}: every sparse-definable set is either sparse or has $\Theta(n)$ growth. However, in the higher-dimensional case $X \subseteq \N^d$, such a dichotomy fails; indeed, such a set can have $\Theta(n^r)$ growth for any $0 \leq r \leq d$. (For example, the graph of addition is a sparse-definable set with $\Theta(n^2)$ growth.) So it is plausible that there exists some higher-dimensional set $X$ with $\Theta(n)$ growth (for instance, the graph of a linearly bounded function $f : \N \to \N$) that is intermediate between $k^\N$ and $V_k$ in terms of the sets it defines, thus disproving the higher-dimensional generalization of Fact \ref{bbs_dichotomy}.

Along the way toward proving Theorem \ref{sublinear_e_definable}, we obtain a characterization theorem for sparse $k$-automatic sets (here \textit{sparse} means that a set has at most polylogarithmic growth).

\begin{customthm}{\ref{sparse_equals_power_internal}}
    Let $X \subseteq \N^d$. The following are equivalent:

    \begin{enumerate}[\normalfont (I)]
        \item $X$ is a sparse $k$-automatic set.
        \item There exist $r > 0$, a Presburger-definable $D \subseteq \N^r$, and a Presburger-definable $f : D \to \N^d$ such that $f(D \cap (k^\N)^r) = X$.
        \item There exist $r > 0$, a $k$-automatic $D' \subseteq (k^\N)^r$, and a $k$-automatic $f : D' \to \N^d$ such that $f(D') = X$.
    \end{enumerate}
\end{customthm}

\textbf{Acknowledgements.} The author was supported in part by ERC Grant DynAMiCs (ID. 101167561). The author would like to thank Jason Bell, Alexi Block Gorman, and Mikhail Starchak for helpful conversations about this paper.

While composing this paper, the author made use of LLM assistance for the purposes of literature review, proof generation, and proofreading, using GPT-6. The text of this paper was typed by hand; none of its verbiage was LLM-generated.

\section{Preliminaries}

Fix an integer $k > 1$. Throughout, definability is with parameters from the entire domain (though as mentioned before, we work in expansions of Presburger arithmetic, so it will not matter). The set $\N$ contains $0$.

The study of sets definable from base-$k$ arithmetic naturally makes use of the base-$k$ encoding itself. In this paper, we will define the base-$k$ encoding as follows.

\begin{defn}
    Let $A$ be any set.
    \begin{enumerate}[(i)]
        \item For a tuple $\vec{x} = (x_1, \dots, x_d) \in A^d$, let $\pi_i(\vec{x}) = x_i \in A$.
        \item For a subset $X \subseteq A^d$, let $\pi_i(X) = \{\pi_i(\vec{x}) : \vec{x} \in X\} \subseteq A$.
        \item Lastly, for a word $w = \vec{x}_1 \dots \vec{x}_n \in (A^d)^*$, let $\pi_i(w) = \pi_i(\vec{x}_1) \dots \pi_i(\vec{x}_n) \in A^*$.
    \end{enumerate}
    That is, $\pi_i$ denotes \textbf{projection} onto the $i$th coordinate.
\end{defn}

\begin{defn}
    \,
    \begin{enumerate}[(i)]
        \item We denote by $\Sigma_k = \{0, \dots, k-1\}$ the alphabet of digits in base $k$.
        \item Given a word $w = w_n w_{n-1} \dots w_1 w_0 \in \Sigma_k^*$ (note the order!), we let:

        $$[w]_k = \sum_{i=0}^n w_i k^i$$

        In other words, $[w]_k$ is the number that has $w$ as a base-$k$ representation, with the \textbf{most significant digit first}. We say that $w$ \textbf{encodes} $[w]_k$. The \textbf{canonical} encoding of a natural number is its shortest such representation (so, for zero, it is the empty word).
        \item Let $d$ be a positive integer. Given a word $w \in (\Sigma_k^d)^*$, we let:

        $$[w]_k = ([\pi_1(w)]_k, \dots, [\pi_d(w)]_k)$$

        In other words, $[w]_k$ is the tuple in $\N^d$ whose $i$th coordinate is encoded by the $i$th coordinate of each character of $w$.
        \item Let $d$ be a positive integer. Given a language $L \subseteq (\Sigma_k^d)^*$, we let $[L]_k = \{[w]_k : w \in L\}$.
    \end{enumerate}
\end{defn}

It follows elementarily that every tuple in $\N^d$ has exactly one canonical encoding in $(\Sigma_k^d)^*$, and all other encodings are simply the canonical encoding prefixed by some number of $\vec{0} = (0, \dots, 0)$. So when we find an $L$ encoding a given set, we will normally require that $L$ be \textbf{zero-closed}, meaning that it is closed under the operation of prefixing $\vec{0}$.

For more on base-$k$ encodings of sets, the reader may consult \cite{AS03}.

We now define several notions of definability.

\begin{defn}
    \,
    \begin{enumerate}[(i)]
        \item A set $X \subseteq \N^d$ is \textbf{$k$-automatic} if the language of all encodings of its tuples is regular. Equivalently (\cite{BHMV94}), $X$ is $k$-automatic if it is definable in $(\N, +, V_k)$, where $(x, y) \in V_k$ when $y$ is the largest power of $k$ dividing $x$.
        \item A set $X \subseteq \N^d$ is \textbf{sparse-definable} if it is definable in $(\N, +, k^\N)$.
        \item A set $X \subseteq \N^d$ is \textbf{Presburger-definable} if it is definable in $(\N, +)$.
    \end{enumerate}

    We say that a function $f : D \to \N^d$, where $D \subseteq \N^r$, is $k$-automatic (resp. sparse-definable, Presburger-definable) if its graph is $k$-automatic (resp. sparse-definable, Presburger-definable).
\end{defn}

Clearly, (iii) is stronger than both (i) and (ii); in fact, it is strictly stronger, because $k^\N$ is not Presburger-definable. Moreover, (ii) is strictly stronger than (i), as may be seen via a number of methods. Perhaps the most direct is to note that in \cite{Sem80}, Semenov gives a characterization theorem for sparse-definable subsets of $\N$, and that there exist $k$-automatic sets failing this characterization.

Much of this paper will involve measuring the sizes of sets and languages. We do this via what we call \textbf{growth functions}.

\begin{defn}
    \,
    \begin{enumerate}[(i)]
        \item Let $X \subseteq \N^d$ be a set. The \textbf{growth function} of $X$ maps a natural number $n$ to $|X_{\leq n}|$, where $X_{\leq n} = \{(x_1, \dots, x_d) \in X : \max(x_1, \dots, x_d) \leq n\}$. We say that $X$ is \textbf{sparse} if $|X_{\leq n}| \in O(\log^r n)$ for some natural number $r$.
        \item Let $L \subseteq \Sigma^*$ be a language. The \textbf{growth function} of $L$ maps a natural number $n$ to $|L_{\leq n}|$, where $L_{\leq n} = \{w \in L : |w| \leq n\}$. We say that $L$ is \textbf{sparse} if $|L_{\leq n}| \in O(n^r)$ for some natural number $r$.
    \end{enumerate}
\end{defn}

\begin{rmk}
\label{sparse_iff_sparse}
    Let $L \subseteq (\Sigma_k^d)^*$ be zero-closed, and let $X = [L]_k$. Then:
    
    $$|L_{\leq n}| = \sum_{i=0}^n |X_{\leq k^i - 1}|,$$

    and the latter expression is bounded below by $|X_{\leq k^n - 1}|$ and above by $(n+1) |X_{\leq k^n - 1}|$. From this it follows that $L$ is sparse iff $X$ is sparse.
\end{rmk}

Szilard, Sheng, Zhang, and Shallit give a decomposition theorem for sparse regular languages in \cite{SSZS92}.

\begin{fact}[\cite{SSZS92}, Theorem 3]
\label{sparse_char}
    Let $L$ be a regular language. Then $L$ is sparse iff it may be written as a finite union of languages of the form $v_1w_1^* v_2w_2^* \cdots v_rw_r^* v_{r+1}$, where each $v_i$ is a word and each $w_i$ is a nonempty word.
\end{fact}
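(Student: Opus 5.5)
The proof goes through a trimmed deterministic automaton for $L$: sparseness turns out to force every strongly connected component of the automaton to be a single simple cycle. Reading off accepting paths then gives the decomposition.

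\emph{Easy direction.} First I show every language of the form $v_1w_1^*v_2w_2^*\cdots v_rw_r^*v_{r+1}$ is sparse. A word of length at most $n$ in it is determined by the exponents $(a_1,\dots,a_r)$ with $a_i\le n$. Hence there are at most $(n+1)^r$ such words, which is $O(n^r)$. A finite union of sparse languages is sparse, since the growth functions add.

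\emph{Setup for the converse.} Assume $L$ is regular and sparse. Fix a DFA $A$ for $L$ and delete all states that are not both accessible and coaccessible; the resulting partial DFA still accepts $L$.

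\emph{Key lemma.} No state $q$ of $A$ admits two distinct first-return loops, that is, two distinct paths $q\to q$ that do not visit $q$ in between. Suppose such loops exist, with labels $u$ and $v$.
\begin{enumerate}[(a)]
    \item Neither label is a prefix of the other. If $u$ were a prefix of $v$, then by determinism the path for $v$ would follow the path for $u$ and return to $q$ early. This contradicts the first-return property unless the two paths coincide.
    \item Therefore $\{u,v\}$ is a prefix code and generates a free submonoid. So there are at least $2^m$ distinct words in $\{u,v\}^m$, each of length at most $m(|u|+|v|)$.
    \item Choose a word $x$ leading from the initial state to $q$ and a word $y$ leading from $q$ to an accepting state. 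The map $w\mapsto xwy$ is injective from $\{u,v\}^m$ into $L$.
    \item This gives exponential growth of $|L_{\le n}|$, contradicting sparseness.
\end{enumerate}
Consequently every strongly connected component of $A$ is either a single state without a loop or a single simple cycle.

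\emph{Decomposition.} Since every component is trivial or a simple cycle, the components form a DAG. An accepting path visits a sequence of components $C_1,\dots,C_s$, with $s$ at most the number of states. Consecutive components are joined by single transitions. Fix one such chain, together with the entry state $p_i$ and exit state $p_i'$ of each $C_i$ and the connecting transitions.
\begin{itemize}
    \item Inside a cyclic $C_i$, the labels of paths from $p_i$ to $p_i'$ are exactly $w_i^*z_i$. Here $w_i$ is the cycle label read from $p_i$, which is nonempty, and $z_i$ is the label of the arc of the cycle from $p_i$ to $p_i'$.
    \item Inside a trivial component, the only path contributes the empty word.
\end{itemize}
Absorb each $z_i$ and each connecting letter into the following constant word $v_{i+1}$; trivial components contribute no starred factor. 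The language of the chain is then of the form $v_1w_1^*\cdots v_rw_r^*v_{r+1}$. There are finitely many chains, so $L$ is a finite union of such languages.

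\emph{Main obstacle.} The only delicate step is part (a) of the key lemma: one must use determinism together with the first-return property to get that the two labels are not prefix-comparable, and hence generate a free monoid. The rest is bookkeeping.
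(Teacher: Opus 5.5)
Your proof is correct. The paper does not prove this statement; it imports it from \cite{SSZS92}. What you give is essentially the standard automaton-theoretic argument for that result: polynomial growth forces every strongly connected component of the trimmed DFA to be trivial or a single simple cycle, and accepting paths then split into factors of the required form.

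The one step you assert without justification is the passage from the key lemma to the shape of the components. It is routine:
\begin{enumerate}[(i)]
    \item Suppose a state $p$ of a nontrivial component $C$ had two distinct outgoing transitions into $C$.
    \item By strong connectivity, each could be continued inside $C$ until the first return to $p$. This gives two first-return loops at $p$ that differ in their first transition, which the key lemma forbids.
    \item Hence every state of $C$ has exactly one outgoing transition inside $C$.
    \item A finite strongly connected digraph in which every vertex has out-degree one is a single simple cycle.
\end{enumerate}

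A smaller point: the components form a DAG for any digraph, not because of their shape, so your ``since'' there is misplaced. It does not affect the argument.
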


We now motivate our use of ``sparse-definable'' to mean definable in $(\N, +, k^\N)$. In fact, sparseness and definability in $(\N, +, k^\N)$ are closely related concepts. The relationship follows from a characterization of unary sparse-definable sets given by Semenov.

\begin{fact}[\cite{Sem80}, Theorem 5]
\label{semenov_char}
    Let $X \subseteq \N$ be sparse-definable. Then $X$ is encoded by a finite union of languages of the form $v_1w_1^* v_2w_2^* \cdots v_rw_r^* \Sigma_{\ell, m, c}$, where $v_i, w_i$ are words and where $\Sigma_{\ell, m, c}$, for natural numbers $\ell, m, c$, is the set of words of length divisible by $\ell$ that encode numbers congruent to $-c$ modulo $m$.
\end{fact}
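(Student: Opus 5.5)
Fact \ref{semenov_char} is cited from \cite{Sem80}, but I would prove it directly by quantifier elimination followed by an analysis of the atomic pieces. The plan has four steps.

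\textbf{Step 1: quantifier elimination.} Expand $(\N, +, k^\N)$ by the function $\lambda(x) = $ the largest power of $k$ that is $\leq x$ (with $\lambda(0)=0$), by truncated subtraction, and by the predicates $\equiv_m$ for all $m$. All of these are definable in $(\N, +, k^\N)$. In this expanded language I would invoke, or reprove along Semenov's lines, elimination of quantifiers. After elimination, every formula $\varphi(x)$ in one free variable is a Boolean combination of atomic conditions of three kinds: $t(x) = 0$, $t(x) > 0$, and $t(x) \equiv c \pmod m$.

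Here $t$ ranges over $\Z$-linear combinations of terms $x, \lambda(x), \lambda(x - \lambda(x)), \dots$ and their iterates. Concretely, a term is built from $x$ and a bounded number $s$ of ``leading pieces'' $p_1 > p_2 > \dots > p_s$. These are the successive powers of $k$ obtained by repeatedly applying $\lambda$ to remainders, together with the tail $x - (\text{those pieces})$.

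\textbf{Step 2: atomic conditions in word form.} For fixed $s$, I would stratify $\N$ by the shape of the top $s$ nonzero digit positions of $x$. This shape is recorded by the leading digits $a_1, \dots, a_s$ and the gaps $g_i = \log_k p_i - \log_k p_{i+1}$.

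On each stratum, a linear condition $\sum_i c_i p_i + c\, x \mathrel{R} 0$ depends only on two things:
\begin{itemize}
\item which gaps exceed a bound $B$ determined by the coefficients (large gaps make the inequality decided by the dominant term);
\item the exact values of the gaps that are below $B$.
\end{itemize}
Congruences modulo $m$ depend only on the gaps modulo the order of $k$ modulo the $k$-free part of $m$, plus the residue of the tail. Every such condition is therefore a finite union of languages of the form
\[
v_1 (0^{e_1})^* v_2 (0^{e_2})^* \cdots v_r\, \Sigma_{\ell, m, c},
\]
where the $v_i$ are short digit blocks and $\Sigma_{\ell,m,c}$ captures the unconstrained remainder of the word. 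The divisibility of word length by $\ell$ in $\Sigma_{\ell,m,c}$ accounts for the dependence of congruences on positions modulo the multiplicative order.

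\textbf{Step 3: closure under Boolean operations.} Let $\mathcal{C}$ be the class of sets encoded by finite unions of such languages. I would show that $\mathcal{C}$ is closed under union, intersection and complement, which finishes the proof. Union is trivial.

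For intersection and complement I would first refine to a normal form. Fix a common $\ell, m$ for all pieces and take a finite partition by the residues $c$. Then the tails $\Sigma_{\ell,m,c}$ either coincide or are disjoint, so the prefix parts can be handled as regular, bounded, sparse languages.

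\textbf{Main obstacle.} The difficulty is exactly this complement step. The complement of a set of the form $v w^* \Sigma_{\ell,m,c}$ is not obviously in $\mathcal{C}$: a word can fail to match because its prefix has a different, non-sparse shape. The remedy is to use the stratification of Step 2 as the ambient partition. Every $x$ falls into exactly one stratum, determined by a bounded top-digit pattern with gaps either exact or ``$\geq B$'' in a fixed residue class. Each stratum is itself of the required form, and within a stratum membership is decided completely. The complement is then the union of the remaining strata, so the real work is checking that the strata are well-defined and pairwise disjoint.
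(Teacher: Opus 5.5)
The paper does not prove this statement; it is quoted from Semenov \cite{Sem80}. Your proposal therefore has to work as a self-contained reproof, and it breaks at Step 2.

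You claim that on a stratum every atomic condition $\sum_i c_i p_i + c\,x \mathrel{R} 0$ is decided by which gaps exceed $B$ and by the exact small gaps. This fails as soon as $c \neq 0$, because $x$ contains the tail. Take $k=2$ and $X=\{x\geq 1 : 3x>4\lambda(x)\}$. This set is sparse-definable, and it is one of your own atomic conditions with $s=1$. Write $x=2^j+t$ with $t<2^j$. The condition becomes $t>2^j/3$, and since $1/3=0.0101\ldots$ in binary, the canonical encodings of $X$ are exactly $1(01)^*1\{0,1\}^*$. For every $s$, on the stratum where all recorded gaps equal $2$, membership depends on whether the unrecorded tail exceeds $p_s/3$.

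Worse, $X$ is not encoded by any finite union of languages $v_1(0^{e_1})^*\cdots v_r\,\Sigma_{\ell,m,c}$. Consider an encoding $0^a1(01)^n1$ of an element of $X$.
\begin{enumerate}[(1)]
\item After the leading zeros every zero run has length $1$, so each loop $(0^{e})^*$ contributes at most one digit there. Hence the prefix part of any matching covers only boundedly many digits past the leading zeros, and the tail $z\in\Sigma_{\ell,m,c}$ is long for large $n$.
\item Membership in $\Sigma_{\ell,m,c}$ depends only on length mod $\ell$ and value mod $m$. So you may rewrite the word as $0^a1(01)^i00u$, with $i$ bounded (so that $0^a1(01)^i$ still contains the prefix part) and $u$ chosen to preserve length and residue.
\item The rewritten word is still accepted, but it encodes a number outside $X$.
\end{enumerate}
So your class $\mathcal{C}$ is strictly too small, and the argument, if completed, would prove a false statement. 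This is exactly why Semenov's normal form has arbitrary loops $w_i$. Comparing $x$ with rational multiples of its leading pieces is governed by the eventually periodic base-$k$ expansions of those rationals, and such a comparison can depend on unboundedly many digits.

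The same example exposes the other missing pieces.
\begin{itemize}
\item In Step 1, the quantifier elimination is only asserted, though it is the substantive content of the theorem. Your description of the terms is also too narrow. Composing $\lambda$ with truncated subtraction produces terms such as $\lambda(3x \mathbin{\dot{-}} 4\lambda(x))$. Up to a factor of $2$, this term locates the position where the expansion of $x$ first departs from $1(01)^\infty$. That position can lie arbitrarily deep inside $x$, not among a bounded number of leading pieces.
\item Your complement argument in Step 3 relies on strata on which membership is completely decided. Once the loops $w_i$ are arbitrary, no stratification by a bounded top-digit pattern decides membership. Closure of Semenov's class under complement and intersection therefore needs a genuinely different argument, for instance a structural description of the languages $S\cdot\Sigma_{\ell,m,c}$ with $S$ sparse regular. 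The proposal does not supply one.
\end{itemize}
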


\begin{cor}
\label{entropy_dichotomy}
    \,
    \begin{enumerate}[(i)]
        \item Let $X \subseteq \N$ be sparse-definable. Then either $X$ is sparse, or else $|X_{\leq n}| \in \Theta(n)$.
        \item Let $X \subseteq \N$ be sparse $k$-automatic. Then $X$ is sparse-definable.
    \end{enumerate}
\end{cor}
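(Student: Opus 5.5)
Both parts come from Fact \ref{semenov_char} together with Fact \ref{sparse_char} and Remark \ref{sparse_iff_sparse}. For (i), I will write the zero-closed encoding language of $X$ as a finite union of pieces $P = v_1w_1^*\cdots v_rw_r^*\Sigma_{\ell,m,c}$. A piece contributing only finitely many numbers can be discarded: it changes $X$ by finitely many elements, so it affects neither sparseness nor a $\Theta(n)$ bound. The argument then splits according to whether the tail $\Sigma_{\ell,m,c}$ is essentially trivial or essentially full.

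\textbf{Case A: a piece with a nonempty tail.} Suppose some piece has $\Sigma_{\ell,m,c}$ containing words of arbitrarily large length. Fix $N$ and choose a long word of the prefix part $v_1w_1^*\cdots v_rw_r^*$. Appending all words of a fixed length $t$ divisible by $\ell$ and in the right residue class modulo $m$ gives at least roughly $k^t/m$ distinct numbers below $k^{t+C}$, where $C$ is the length of the chosen prefix word. Varying $t$ over multiples of $\ell$ then gives $|X_{\leq n}| \geq \epsilon n$ along all $n$, with constants absorbing the gaps $k^{\ell}$ between consecutive admissible lengths. The upper bound $|X_{\leq n}|\le n+1$ is trivial, so $|X_{\le n}|\in\Theta(n)$.

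\textbf{Case B: every tail is finite.} If every piece has $\Sigma_{\ell,m,c}$ contributing only boundedly many (finitely many) words, then the language is, up to a finite modification, a finite union of sets $v_1w_1^*\cdots v_rw_r^*u$ with $u$ ranging over a finite set. Such a language is sparse by the easy direction of Fact \ref{sparse_char}: it has at most polynomially many words of each length. Remark \ref{sparse_iff_sparse} then shows $X$ is sparse.

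\textbf{The main obstacle in (i).} The delicate point is handling $\Sigma_{\ell,m,c}$ precisely, in particular when it contains the empty word or only short words. I would note that $\Sigma_{\ell,m,c}$ is either finite (possible only in degenerate cases, say when it reduces to the empty word) or contains a positive proportion of words of every length divisible by $\ell$. That proportion is at least about $1/m$ once the length is large, since base-$k$ residues modulo $m$ are equidistributed over all words of a given length. Only the factor of $m$ coprime to $k$ matters here, and the part sharing factors with $k$ is determined by the last few digits. This yields the counting needed in Case A.

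\textbf{Part (ii).} Let $X$ be sparse and $k$-automatic. Its zero-closed encoding language is regular and, by Remark \ref{sparse_iff_sparse}, sparse. By Fact \ref{sparse_char} it is a finite union of languages $v_1w_1^*\cdots v_rw_r^*v_{r+1}$, so it suffices to show each set $[v_1w_1^*\cdots v_{r+1}]_k$ is sparse-definable. The number encoded by $v_1w_1^{n_1}\cdots w_r^{n_r}v_{r+1}$ is an explicit sum over the blocks. Each $w_i^{n_i}$ contributes $[w_i]_k\cdot\frac{k^{|w_i|n_i}-1}{k^{|w_i|}-1}$ times a power of $k$, which counts the lengths of all blocks to its right. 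Writing $y_i = k^{\text{(total length to the right of block } i)}$, these $y_i$ are powers of $k$ with Presburger-definable constraints between consecutive ones. The constraint "$y_{i}/y_{i+1}$ is a power of $k^{|w_i|}$ times a fixed constant" is expressible via congruence conditions on exponents. Those conditions are in turn definable in $(\N,+,k^\N)$, using the definable relation "$x,y\in k^\N$ and $y = k^{j}x$ with $j$ in a residue class", which can be obtained by quantifying over $k^{|w|}$-powers via the definable set $(k^{p})^\N$.

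I expect the subtle step of (ii) to be exactly this definability of $(k^p)^\N$, along with multiplication of powers by fixed rationals. Clearing the denominators $k^{|w_i|}-1$ by multiplying through by their product makes $X$ an image under a Presburger map of a sparse-definable set. The set $X$ is then existentially definable in $(\N,+,k^\N)$.
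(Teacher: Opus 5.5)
Your part (i) is correct and is essentially the paper's argument. You split on whether $\Sigma_{\ell,m,c}$ is finite and use Fact \ref{sparse_char} in the finite case. Otherwise you count the at least $\lfloor k^t/m\rfloor$ completions of length $t$ of one fixed prefix (e.g.\ $v_1\cdots v_r$; it need not grow with $N$). The equidistribution discussion is unnecessary, since words of length $t$ biject onto the interval $[0,k^t)$. In Case B, pass to $0^*L$ (still of the form in Fact \ref{sparse_char}) before invoking Remark \ref{sparse_iff_sparse}, which requires zero-closure.

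For (ii) you take a different route from the paper. Its one-line proof cites Facts \ref{sparse_char} and \ref{semenov_char}, using Semenov's characterization in the form-implies-definable direction, i.e.\ the converse of Fact \ref{semenov_char} as stated. Your explicit encoding by powers $y_i$ of $k$ is instead the construction in the proof of Theorem \ref{sparse_equals_power_internal}, (I)$\Rightarrow$(II). It avoids that citation and works in every dimension (Corollary \ref{sparse_sparse_definable}).

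As written, though, it has a gap exactly where you suspect one. You never show that $(k^p)^\N$, or the relation ``$x,y\in k^\N$ and $y=k^jx$ with $j\equiv a\pmod p$'', is definable in $(\N,+,k^\N)$. Calling $(k^p)^\N$ ``the definable set'' presupposes the point. Nor can the relation be obtained as $\exists z\in(k^p)^\N\,(y=k^a z x)$, since the structure cannot multiply two variables.

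The missing idea is a congruence. Since $k^p\equiv 1\pmod{k^p-1}$ and $1,k,\dots,k^{p-1}$ are pairwise incongruent modulo $k^p-1$, we have $k^e\equiv k^{e'}\pmod{k^p-1}$ iff $e\equiv e'\pmod p$. Hence, for $x,y\in k^\N$, the relation above is the Presburger condition $y\ge x\wedge y\equiv k^a x\pmod{k^p-1}$. In particular $(k^p)^\N=\{x\in k^\N: x\equiv 1\pmod{k^p-1}\}$.

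Take $y_j$ to be $k$ raised to the length of $w_j^{n_j}v_{j+1}\cdots w_r^{n_r}v_{r+1}$, with $y_{r+1}=1$. Your constraint then becomes $y_j\ge k^{|v_{j+1}|}y_{j+1}$ together with $y_j\equiv k^{|v_{j+1}|}y_{j+1}\pmod{k^{|w_j|}-1}$, exactly as in the paper. The block $w_j^{n_j}$ contributes the integer $\frac{[w_j]_k}{k^{|w_j|}-1}\bigl(y_j-k^{|v_{j+1}|}y_{j+1}\bigr)$, a Presburger-definable function of $(y_j,y_{j+1})$. With this supplied, the rest of your argument, including clearing the denominators, goes through.
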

\begin{proof}
    For (i), it suffices to prove the result for a set $X$ encoded by an individual language of the form $L = v_1w_1^* v_2w_2^* \cdots v_rw_r^* \Sigma_{\ell, m, c}$. If $\Sigma_{\ell, m, c}$ is finite, then $X$ is sparse by Fact \ref{sparse_char}. Otherwise, let $L' = v_1 \dots v_r \Sigma_{\ell, m, c} \subseteq L$, and consider for $n \geq 0$ the set of words in $L'$ of length $|v_1 \dots v_r| + n\ell$. The number of words in this set is equal to the number of words of length $n\ell$ that encode numbers congruent to $-c$ modulo $m$, which is at least $\left\lfloor \frac{k^{n\ell}}{m} \right\rfloor$. Each such word encodes a distinct element of $X$ less than $k^{n\ell}$, so:

    $$|X_{\leq k^{nl}}| \geq \left\lfloor \frac{k^{n\ell}}{m} \right\rfloor \in \Omega(k^{nl}).$$

    It remains to note that $X$ cannot possibly have a growth function outside $O(n)$ because it is bounded above by the growth function of $\N$ itself.

    For (ii), let $L$ be the language encoding $X$. The result follows by applying Facts \ref{sparse_char} and \ref{semenov_char}.
\end{proof}

Therefore, any subset of $\N^d$ definable from Presburger and \textit{unary} sparse $k$-automatic sets is in fact sparse-definable by our definition. (Corollary \ref{sparse_sparse_definable} generalizes this to higher-dimensional $X$.)

Sparseness is convenient because it provides a natural set-theoretic ideal to use in the context of $k$-automatic sets. We will make use of this ideal in conjunction with the following previous result of Schulz \cite{Sch25}. (Note that the original statement of the following is stronger, and moreover is over $\Z$ instead of $\N$, but the author notes that this does not matter.)

\begin{fact}[\cite{Sch25}, Theorem C]
\label{schulz_definable_dichotomy}
    Let $\calI$ be any set-theoretic ideal on $\N$. Let $E \subseteq \N$ be such that any Presburger-definable image of a power of $E$ lies in $\calI$. Then every unary set definable in $(\N, +, E)$ either lies in $\calI$ or else contains arbitrarily long arithmetic progressions with the same difference.
\end{fact}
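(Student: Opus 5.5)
The plan is to prove a structural statement about all definable sets by induction on formulas, and then read off the dichotomy for unary sets by a covering argument.

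Call a set $T \subseteq \N^m$ \emph{thin} if it is contained in a Presburger-definable image $g(D \cap E^r)$ for some $r$, some Presburger-definable $D \subseteq \N^r$ and some Presburger-definable $g$. Thin sets are closed under finite unions and under Presburger-definable images. For unary sets, the hypothesis says exactly that every thin set lies in $\calI$.

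\textbf{Structural claim.} For every formula $\varphi(\vec y, x)$ of $(\N, +, E)$ I would show the following. There are a Presburger-definable $P \subseteq \N^{m+1}$ and a set $T \subseteq \N^{m+1}$ of the form $T = \{(\vec y, x) : \exists \vec e \in E^r,\ (\vec y, \vec e, x) \in Q\}$, with $Q$ Presburger-definable and each fiber of $Q$ over $\vec y$ finite-to-one onto $x$, such that for every $\vec y$ the symmetric difference of $\varphi(\vec y, \N)$ and $P_{\vec y}$ is contained in $T_{\vec y}$. The point is that $T_{\vec y}$ is then thin for each fixed parameter $\vec y$.

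\textbf{Induction.} Atomic formulas are either Presburger, with $T = \varnothing$, or of the form $E(t(\vec y, x))$, which is contained in the thin set obtained by solving the linear term $t$ for $x$. Boolean combinations are handled by unioning the exceptional sets.

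The existential step $\exists z\,\varphi$ is the main obstacle. Projecting a Presburger set is harmless, but the error set can change drastically after projection. To handle this I would first strengthen the induction to a uniform decomposition: definable sets are finite Boolean combinations of Presburger sets and sets $\{x : \exists \vec e \in E^r\, (\vec y, \vec e, x) \in Q\}$. In effect this is a quantifier-reduction for $(\N, +, E)$ relative to Presburger. The projection then commutes with $\exists \vec e$, because the witnesses $\vec e$ can be absorbed into the power $E^{r+1}$. Making the complements in the Boolean combinations also fit this shape is the delicate part. There I would try an ultrapower and genericity argument: an element generic over $\vec y$ and the $E$-closure behaves as in Presburger arithmetic, so outside a thin set the fiber agrees with a Presburger set.

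\textbf{Conclusion for unary sets.} Given a unary definable $X$, the claim gives $X \,\triangle\, P \subseteq T$ with $P$ Presburger and $T$ thin, so $T \in \calI$.

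If $P$ is finite, then $X \subseteq P \cup T \in \calI$.

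Otherwise $P$ contains a tail $a + d\N$ of an arithmetic progression, and $X \supseteq (a + d\N) \setminus T$. Suppose $X$ had no $n$-term progressions of difference $d$ for some $n$. Then every block of $n$ consecutive terms of $a + d\N$ meets $T$, so
$$a + d\N \subseteq \bigcup_{j < n} (T - jd) \cup F$$
for some finite $F$. The right-hand side is a Presburger image of a power of $E$ union a finite set, hence lies in $\calI$. Then finitely many translates of $a + d\N$ cover a cofinite set, so $\N \in \calI$. But then $X \in \calI$ anyway, and the dichotomy holds in every case.
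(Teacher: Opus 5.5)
The genuine gap is that your structural claim, that every definable unary $X$ satisfies $X \triangle P \subseteq T$ for some Presburger $P$ and thin $T$, is false. So the induction cannot be completed however the existential step is handled, and you leave that step as ``I would try'' in any case. (The paper quotes this result without proof, so I am judging your argument on its own terms.)

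Take $E = k^\N$ and $\calI$ the ideal of sparse sets. The hypothesis holds by (III)$\Rightarrow$(I) of Theorem~\ref{sparse_equals_power_internal}, which is exactly the instance used in Lemma~\ref{prec_dichotomy}, and every thin set is sparse. The set
\[ X = \{x : \exists e \in k^\N\,(e \le x < ke \wedge e \equiv 1 \pmod{k^2-1})\} = \bigcup_{j \ge 0} [k^{2j}, k^{2j+1}) \]
is definable in $(\N,+,E)$, yet $X \triangle P$ is non-sparse for every Presburger $P$:
\begin{itemize}
\item if $P$ is finite, $X \setminus P$ has at least $(k-1)k^{2j} - |P|$ points below $k^{2j+1}$;
\item if $P$ is infinite, it contains some $a + p\N$ with $p \ge 1$, and then $P \setminus X$ has at least $(k^{2j+2}-k^{2j+1})/p - 1$ points in $[k^{2j+1}, k^{2j+2})$ for large $j$.
\end{itemize}
Note that $X$ has exactly the shape $\{x : \exists e \in E\,((e,x) \in Q)\}$ of your proposed uniform decomposition, but with $Q$ not finite-to-one. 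Such sets are not thin, so that decomposition, even if proved, does not give $X \triangle P \subseteq T$. Your genericity heuristic fails for the same reason. A point far from the Presburger closure of $E$ still has a type that records its cut in $E$ (here, the parity of the exponent of the largest power of $k$ below it), and that is not a Presburger property of $x$. Even the atomic step fails in the uniform form. For $\varphi(y,x) = E(y)$ there is no $x$ to solve for, and a Presburger $P$ as in your claim would make $k^\N = \{y : P_y \text{ is cofinite}\}$ Presburger-definable.

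What is true, and suffices, is a \emph{local} statement: for each definable $X$ there are a thin $T$ and $n, N \ge 1$ such that $x \in X \iff x+n \in X$ whenever $[x - Nn, x + Nn] \cap T = \varnothing$. The pattern may change every time one crosses a point of $T$, as in the example. Your last paragraph then works with no Presburger approximant at all. If $X$ has no $L$-term progression of difference $n$, every $x \in X$ lies within $(L+N)n$ of $T$, so $X \subseteq \bigcup_{|i| \le (L+N)n} (T+i) \in \calI$.

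The local statement follows by compactness. Let $\varphi$ define $X$. If the statement fails, an $\aleph_1$-saturated extension $M$ contains $x^*$ and $\mu^* > 0$ with the following properties:
\begin{itemize}
\item $\mu^*$ is divisible by every standard integer;
\item $\varphi(x^*) \not\leftrightarrow \varphi(x^*+\mu^*)$;
\item $(x^* + H) \cap A = \varnothing$, where $A$ is the Presburger-definable closure of $E^M$ and $H$ is the convex subgroup generated by $\mu^*$.
\end{itemize}
Purity of $A$ and convexity of $H$ show that $x^*$ has infinite order modulo $A + H$. Hence some homomorphism $\lambda$ from the group completion of $M$ to $\mathbb{Q}$ kills $A + H$ and sends $x^*$ to $1$. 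Then $z \mapsto z + \lambda(z)\mu^*$ is an order-preserving automorphism of $(M,+,E^M)$. It fixes $A \supseteq E^M$ pointwise and moves $x^*$ to $x^*+\mu^*$, a contradiction.
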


\section{Internally witnessed sparseness}

The goal of this section will be to prove Theorem \ref{sparse_equals_power_internal}. We begin by noting that $k$-automatic functions cannot grow too quickly (or too slowly!) and hence preserve sparse sets. A version of this for languages themselves was originally proven by Elgot and Mezei in \cite{EM65}; see also Proposition 6.1 of Blumensath and Gr\"adel (\cite{BG04}). For completeness, we provide a direct proof here.

\begin{fact}
\label{kautomatic_linear_growth}
    Let $f : \N^r \to \N^d$ be a $k$-automatic function. Then there exists some $C$ such that whenever $f(x_1, \dots, x_r) = (y_1, \dots, y_d)$, we have $\max(y_1, \dots, y_d) \leq C \max(1, x_1, \dots, x_r)$.
\end{fact}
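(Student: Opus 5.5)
We will argue with a pumping argument on a finite automaton that reads the graph of $f$. Let $A$ be a deterministic automaton with $N$ states over the alphabet $\Sigma_k^{r+d}$ that accepts exactly the zero-closed language of all encodings of tuples $(\vec{x}, \vec{y})$ with $f(\vec{x}) = \vec{y}$. We will show that $C = k^{N+1}$ works.

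Fix $\vec{x}$ with $f(\vec{x}) = \vec{y}$. Let $m$ be the length of the canonical encoding of $\max(1, x_1, \dots, x_r)$, so that $k^{m-1} \leq \max(1, x_1,\dots,x_r)$. Let $n$ be the length of the canonical encoding of the full tuple $(\vec{x}, \vec{y})$, and suppose $n > m + N$. Let $w$ be this canonical encoding. Then the first $n - m$ characters of $w$ have all $\vec{x}$-coordinates equal to $0$. Moreover, the very first character has some nonzero $\vec{y}$-coordinate, since $w$ is canonical and the $\vec{x}$-part is zero there.

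Consider the run of $A$ on $w$ during these first $n-m > N$ characters. Among the states visited at positions $1, \dots, N+1$, two must coincide. So we can write $w = u v z$ with $v$ nonempty, with $uv$ lying inside this zero-$\vec{x}$ prefix, and with $A$ in the same state after reading $u$ and after reading $uv$. Then $uz$ is also accepted.

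The $\vec{x}$-part of $uz$ still encodes $\vec{x}$, because we only deleted characters whose $\vec{x}$-coordinates are $0$ and which lie above all nonzero digits of $\vec{x}$. The $\vec{y}$-part of $uz$ encodes some $\vec{y}'$. We must check that $\vec{y}' \neq \vec{y}$, which is where care is needed.

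The length of $uz$ is $n - |v|$. If $u$ is nonempty, the first character of $uz$ is still the first character of $w$, which has a nonzero $\vec{y}$-coordinate. Hence $\max(\vec{y}') \geq k^{n-|v|-1}$, while $\max(\vec{y}) \geq k^{n-1}$. Equality would force those leading digits to appear at different positions, and in fact $uz$ canonical of length $n-|v|$ cannot encode the same tuple as the canonical $w$ of length $n$.

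If $u$ is empty, we instead pump up: $v v z$ is accepted. It encodes the same $\vec{x}$, and its leading character is nonzero in the $\vec{y}$-part, so it is canonical of length $n+|v|$ and encodes a different $\vec{y}''$.

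In both cases $A$ accepts $(\vec{x}, \vec{y}')$ with $\vec{y}' \neq \vec{y}$, which contradicts the fact that $f$ is a function. Hence $n \leq m + N$. It follows that
$$\max(y_i) < k^{n} \leq k^{m+N} \leq k^{N+1} \max(1, x_1, \dots, x_r).$$

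The main obstacle is the point just handled: ensuring that pumping genuinely changes $\vec{y}$ without changing $\vec{x}$. This is why the pumped segment is taken inside the all-zero $\vec{x}$ prefix, and why canonicity (a nonzero leading $\vec{y}$-digit) is used to distinguish the lengths of the encodings.
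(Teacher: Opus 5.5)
Your proof is correct and follows essentially the same route as the paper. Both arguments pump inside the all-zero high-order prefix of the $\vec{x}$-part, delete a segment so that $\vec{x}$ is unchanged while $\vec{y}$ changes, and derive a contradiction with functionality, with your $k^{N+1}$ playing the role of the paper's $k^{p+1}$. Your separate pump-up case for empty $u$ is harmless but unnecessary: any word of length $n-|v|$ has $\vec{y}$-part below $k^{n-|v|}\le k^{n-1}\le\max(\vec{y})$, so pumping down always changes $\vec{y}$.
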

\begin{proof}
    Let $L$ be the regular language with alphabet $\Sigma_k^{r+d}$ encoding the graph of $f$. Apply the pumping lemma to $L$, and let $p$ be its pumping length. Let $C = k^{p+1}$.

    Assume the lemma is false. In particular, it must then be false for some nonzero tuple $(x_1, \dots, x_r)$, as otherwise we could  simply increase the value of $C$. So let $f(x_1, \dots, x_r) = (y_1, \dots, y_d)$ be such that $\max(y_1, \dots, y_d) > C \max(1, x_1, \dots, x_r) > 0$. In particular, let $y_i$ and $x_j$ be the maximal coordinates; then $y_i > k^{p+1} x_j > 0$. Then $L$ contains a word $w$ of tuples whose $(r+i)$th coordinates encode $y_i$ and whose length is equal to the number of digits in $y_i$. As $x_j$ is less than $\frac{y_i}{k^{p+1}}$, the first $r$ coordinates of $w$ are zero for the first $p$ symbols.

    By the pumping lemma, we can write $w = xyz$, where $|xy| \leq p$ and $|y| \geq 1$, such that $xz \in L$. Now, $w$ is a word over $(\Sigma_k^{r+d})^*$; write $w_L, w_R$ to mean the words containing the first $r$ coordinates and last $d$ coordinates of each tuple in $w$, respectively, and write $x_L, y_L, z_L, x_R, y_R, z_R$ correspondingly.

    Note that because $|xy| \leq p$, $x_L, y_L$ contain only zeroes. Therefore, $[x_L z_L]_k = [x_L y_L z_L]_k$. However, the first character of $w_R$ is nonzero. Therefore, $xz \in L$ encodes $(x_1, \dots, x_r, z_1, \dots, z_d)$ where $z_i \neq y_i$. But this means that the graph of $f$ is not a functional relation; contradiction.
\end{proof}

\begin{cor}
\label{kautomatic_preserves_sparse}
    Let $f : \N^r \to \N^d$ be a $k$-automatic function. If $D \subseteq \N^r$ is sparse and $k$-automatic, then $f(D)$ is sparse.
\end{cor}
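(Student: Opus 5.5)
The plan is to use Fact \ref{kautomatic_linear_growth} to control how far $f$ can move points, and then count. Fix $C$ as in that fact, so that whenever $f(\vec{x}) = \vec{y}$ we have $\max(\vec{y}) \leq C\max(1, \vec{x})$. Since $D$ is sparse, choose $r$ and $A$ with $|D_{\leq n}| \leq A\log^r n$ for all large $n$.

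The upper bound from Fact \ref{kautomatic_linear_growth} by itself gives no control: a point of $f(D)$ lying in $[0,n]^d$ could a priori have a preimage of arbitrary size. So I will use the complementary fact, proved by the same pumping argument with the roles of inputs and outputs swapped. Namely, it suffices to find, for each $\vec{y} \in f(D)$, a preimage $\vec{x} \in D$ with $\max(\vec{x}) \leq C'\max(1, \vec{y})$ for some constant $C'$.

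To obtain this, consider the relation
$$R = \{(\vec{y}, \vec{x}) : \vec{x} \in D,\ f(\vec{x}) = \vec{y}\}.$$
This relation is $k$-automatic, because $D$ and the graph of $f$ are. Next, pick a $k$-automatic uniformizing function $g : f(D) \to D$ with $f(g(\vec{y})) = \vec{y}$. For instance, let $g(\vec{y})$ be the preimage whose encoding is lexicographically least (say, with the length-lexicographic order on tuples). This minimum is first-order definable from $R$ in $(\N, +, V_k)$, since the lexicographic order on tuples is $k$-automatic. Hence $g$ is $k$-automatic.

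Applying Fact \ref{kautomatic_linear_growth} to $g$ requires extending $g$ to all of $\N^d$. I will either set $g$ equal to $\vec{0}$ off $f(D)$, which keeps it $k$-automatic, or simply observe that the proof of Fact \ref{kautomatic_linear_growth} works verbatim for partial functions. This gives a constant $C'$ with $\max(g(\vec{y})) \leq C'\max(1, \vec{y})$.

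With this in hand, the counting is immediate. The map $g$ injects $f(D)_{\leq n}$ into $D_{\leq C'\max(1,n)}$, so
$$|f(D)_{\leq n}| \leq |D_{\leq C' n}| \leq A\log^r(C'n) \in O(\log^r n).$$
Hence $f(D)$ is sparse.

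The main obstacle is confirming that the lexicographically-least-preimage selector is genuinely $k$-automatic. This follows from closure of automatic relations under first-order definitions, via \cite{BHMV94}. Alternatively, one can construct it directly with an automaton that reads the encoding of $\vec{y}$ and guesses the lexicographically least $\vec{x}$. Either route is routine.
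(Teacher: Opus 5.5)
Your proposal is correct and is essentially the paper's proof: take $g$ to be the lexicographically least preimage selector on $f(D)$, note that it is $k$-automatic, apply Fact~\ref{kautomatic_linear_growth} to $g$ rather than to $f$, and count via the injection $f(D)_{\leq n} \hookrightarrow D_{\leq C'n}$. The differences are cosmetic: your initial constant $C$ for $f$ is never used, and you explicitly extend $g$ to all of $\N^d$, a step the paper leaves implicit.
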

\begin{proof}
    For each $\vec{x} \in f(D)$, let $g(\vec{x})$ be the lexicographically first preimage of $\vec{x}$ in $D$. The function $g$ is $k$-automatic, so by Fact \ref{kautomatic_linear_growth}, there is $C \in \N$ such that the coordinates of $g(\vec{x})$ are at most $C$ times the maximum of $1$ and the greatest coordinate of $\vec{x}$. Therefore, for $N$ sufficiently large, the number of points in $f(D)$ with maximum coordinate at most $N$ is bounded by the number of points in $D$ with maximum coordinate at most $CN$. This is in $O(\log^r CN) = O(\log^r N)$, so it follows that $f(D)$ is sparse.
\end{proof}

This allows us to consider the notion of sparseness for a $k$-automatic set that is \textit{internally witnessed} by a surjection from some power of $k^\N$. In other words, the most fundamental example of a sparse $k$-automatic set is $(k^\N)^r$ for some $r$, and by the above corollary, any $k$-automatic transformation applied to this set preserves its sparseness. So it is natural to ask whether \textit{every} other sparse $k$-automatic set is a transformed version of $(k^\N)^r$, for some sufficient set of transformations. We answer this question in the affirmative.

\begin{thm}
\label{sparse_equals_power_internal}
    Let $X \subseteq \N^d$. The following are equivalent:

    \begin{enumerate}[\normalfont (I)]
        \item $X$ is a sparse $k$-automatic set.
        \item There exist $r > 0$, a Presburger-definable $D \subseteq \N^r$, and a Presburger-definable $f : D \to \N^d$ such that $f(D \cap (k^\N)^r) = X$.
        \item There exist $r > 0$, a $k$-automatic $D' \subseteq (k^\N)^r$, and a $k$-automatic $f : D' \to \N^d$ such that $f(D') = X$.
    \end{enumerate}
\end{thm}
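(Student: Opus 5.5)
The plan is to prove the cycle (II) $\Rightarrow$ (III) $\Rightarrow$ (I) $\Rightarrow$ (II).

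\textbf{(II) $\Rightarrow$ (III).} Presburger-definable sets and functions are $k$-automatic, and $(k^\N)^r$ is $k$-automatic. So we may take $D' = D \cap (k^\N)^r$ and restrict $f$ to $D'$.

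\textbf{(III) $\Rightarrow$ (I).} First, $X = f(D')$ is $k$-automatic, since it is the projection of the automatic graph of $f$. Second, $D' \subseteq (k^\N)^r$ is sparse, because $|((k^\N)^r)_{\leq n}| \leq (\log_k n + 1)^r$. To apply Corollary \ref{kautomatic_preserves_sparse}, we need a total function on $\N^r$. We therefore extend $f$ by $0$ outside $D'$; this extension is still $k$-automatic. Sparseness of $X$ then follows.

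\textbf{(I) $\Rightarrow$ (II).} This direction is the heart of the proof.

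\emph{Reduction to one pattern.} Let $L$ be the zero-closed regular language of all encodings of $X$. By Remark \ref{sparse_iff_sparse}, $L$ is sparse. By Fact \ref{sparse_char}, $L$ is a finite union of languages $L_j = v_1w_1^*\cdots v_sw_s^*v_{s+1}$ over $\Sigma_k^d$. Images under Presburger maps of pieces $D_j \cap (k^\N)^{r_j}$ can be combined into one map. To do this, pad all pieces to a common $r$, and add extra power-of-$k$ coordinates whose pattern (for instance, equal to $1$ or not) selects the piece; this selection is Presburger-definable. So it suffices to handle a single language $L_j$, and also to handle $X=\varnothing$, which is trivial.

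\emph{Arithmetic form of one pattern.} Write $a_i = |w_i|$. A word $v_1w_1^{n_1}\cdots v_sw_s^{n_s}v_{s+1}$ encodes a tuple whose $t$-th coordinate is an explicit expression. This expression is a sum of terms of the form $[v_i]\cdot k^{\text{(length of suffix)}}$ and geometric-series terms of the form
\[
[\pi_t(w_i)]_k\,\frac{k^{a_i n_i}-1}{k^{a_i}-1}\, k^{\text{(length of suffix)}}.
\]
The suffix lengths are linear in $n_{i+1},\dots,n_s$.

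Introduce the variables $u_i = k^{\ell_i}$, where $\ell_i$ is the length of the suffix following block $i$ (and, analogously, after each $v_i$). Then every coordinate is a $\mathbb{Q}$-linear combination with fixed coefficients of the quantities $u_i$. The reason is that $k^{a_in_i}\cdot k^{\ell_i}$ is itself the power $k^{\ell_{i}'}$ at the start of the block. The denominators $k^{a_i}-1$ are harmless because the combination is integral.

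So take $r$ to be the number of these boundary positions, and let the coordinates of $\vec y \in (k^\N)^r$ be $k$ raised to the boundary positions. Define $D$ by Presburger conditions on $\vec y$ that force the following:
\begin{itemize}
\item $y_{\text{last}} = k^{0}$;
\item consecutive ratios are the constants $k^{|v_i|}$ (expressed as $y' = k^{|v_i|}y$, which is linear);
\item within block $i$ the ratio $y'/y$ is a power of $k^{a_i}$.
\end{itemize}

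\emph{Main obstacle.} The last condition is the step I expect to be hardest, because ``$y'/y$ is a power of $k^{a}$'' is not obviously linear. Given that $y, y'$ are powers of $k$, I would handle it as follows. Add an auxiliary coordinate $z\in k^\N$ with $y' = z\cdot y$; this is not linear, so instead I would express the whole block differently. Write the block value as $c\,(y'-y)/(k^a-1)$, which is linear in $y,y'$. Then only require that $y'-y$ be divisible by $k^a-1$ and $y'\ge y$. This is Presburger-definable. Moreover, for powers of $k$, the condition $k^a-1 \mid k^{m}(k^{e}-1)$ holds iff $a \mid e$, since $\gcd(k^m,k^a-1)=1$ and $k^a-1\mid k^e-1 \iff a\mid e$.

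The function $f$ is then the linear map described above. It is Presburger-definable, and $f(D\cap(k^\N)^r)$ is exactly the set of tuples encoded by words of $L_j$. I also need to double-check the bookkeeping of leading zero blocks, but zero-closure of $L$ means that extra encodings only repeat points of $X$. Hence the image is exactly $[L_j]_k$, and taking the union over $j$ gives $X$.
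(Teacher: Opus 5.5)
Your proposal is correct and follows essentially the same route as the paper. You decompose the sparse regular language via Fact \ref{sparse_char} and record each word by the powers of $k$ at its block boundaries. You encode the block-length condition as $k^{a_i}-1 \mid y'-y$ with $y' \ge y$, write each coordinate as a rational-coefficient linear function of these powers, and glue the pieces together with selector coordinates.

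The differences are cosmetic. You keep a separate coordinate for every boundary, tied together by linear constant-ratio constraints, whereas the paper absorbs the fixed words into the offsets $k^{|u_{j+1}|}k^{\ell_{j+1}}$. You also explicitly extend $f$ by $0$ in order to apply Corollary \ref{kautomatic_preserves_sparse} in (III) $\Rightarrow$ (I), a step the paper leaves implicit.
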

\begin{proof}
    The implication (II) $\implies$ (III) is trivial, and (III) $\implies$ (I) follows immediately from Corollary \ref{kautomatic_preserves_sparse}, so we need only show (I) $\implies$ (II). Let $X$ be a sparse $k$-automatic set; by Remark \ref{sparse_iff_sparse}, $X$ is encoded by a zero-closed sparse regular $L \subseteq (\Sigma_k^d)^*$. Then by Fact \ref{sparse_char}, we may write $L = L_1 \cup L_2 \cup \dots \cup L_m$, where each $L_i$ is of the form:

    $$L_i = u_1 v_1^* u_2 v_2^* \dots u_r v_r^* u_{r+1}$$

    Note that $L$ is zero-closed, so it makes no difference to assume each $L_i$ is zero-closed (by replacing $L_i$ with $\vec{0}^* L_i$) and hence to assume $r > 0$. We will first give $D_i \subseteq \N^r$ and $f_i : D_i \to \N^d$ such that $f_i(D_i \cap (k^\N)^r) = [L_i]_k$. For a word $w \in L_i$, i.e. $w = u_1 v_1^{e_1} u_2 v_2^{e_2} \dots u_r v_r^{e_r} u_{r+1}$, and for $1 \leq j \leq r$, let $\ell_j = |v_j^{e_j} u_{j+1} \dots u_r v_r^{e_r} u_{r+1}|$. We note that the finite sequence $(\ell_j)_j$ uniquely determines $w$, and its admissible values are characterized thus: letting $\ell_{r+1} = 0$, we must have that $|v_j^{e_j}| = \ell_j - |u_{j+1}| - \ell_{j+1}$ is nonnegative and a multiple of $|v_j|$. As long as these conditions are followed, we obtain a one-to-one correspondence between the vectors $(\ell_j)_j$ and the words $w$.

    So define $D'_i$ to be the set of all tuples $(k^{\ell_1}, \dots, k^{\ell_r}) \in (k^\N)^r$ such that, again setting $\ell_{r+1} = 0$, we have for each $j$ that $k^{\ell_j}$ is not less than $k^{|u_{j+1}|} k^{\ell_{j+1}}$ and that the difference in their exponents is a multiple of $|v_j|$. The latter condition is equivalent to requiring that $k^{\ell_j}$ and $k^{|u_{j+1}|} k^{\ell_{j+1}}$ are congruent modulo $k^{|v_j|} - 1$. We see that $D'_i$ is the intersection of $(k^\N)^r$ with a Presburger-definable set $D_i$.

    Now let $1 \leq a \leq d$ and notice that:
    
    $$[\pi_a(u_j) 0^{\ell_j}]_k = [\pi_a(u_j)]_k \cdot k^{\ell_j}$$

    and:
    
    \begin{equation*}
        \begin{split}
            [\pi_a(v_j)^{e_j} 0^{|u_{j+1}| + \ell_{j+1}}]_k & = \frac{[\pi_a(v_j)]_k}{[1^{|v_j|}]_k} \cdot [(1^{|v_j|})^{e_j} 0^{|u_{j+1}| + \ell_{j+1}}]_k \\
            & = \frac{[\pi_a(v_j)]_k}{[1^{|v_j|}]_k} \cdot \left(k^{|v_j| e_j + |u_{j+1}| + \ell_{j+1}} - k^{|u_{j+1}| + \ell_{j+1}}\right) \\
            & = \frac{[\pi_a(v_j)]_k}{[1^{|v_j|}]_k} \cdot \left(k^{\ell_j} - k^{|u_{j+1}|} k^{\ell_{j+1}}\right)
        \end{split}
    \end{equation*}

    are both Presburger-definable functions of $(k^{\ell_j})_j$. Therefore so is:

    \begin{equation*}
        \begin{split}
            [\pi_a(w)]_k & = [\pi_a(u_1) 0^{\ell_1}]_k + [\pi_a(v_1)^{e_1} 0^{|u_2| + \ell_2}]_k \\
            & + [\pi_a(u_2) 0^{\ell_2}]_k + [\pi_a(v_2)^{e_2} 0^{|u_3| + \ell_3}]_k \\
            & + \dots \\
            & + [\pi_a(u_r) 0^{\ell_r}]_k + [\pi_a(v_r)^{e_r} 0^{|u_{r+1}|}]_k \\
            & + [\pi_a(u_{r+1})]_k
        \end{split}
    \end{equation*}

    and therefore so is:

    $$[w]_k = ([\pi_1(w)]_k, \dots, [\pi_d(w)]_k)$$

    We thus define $f_i$ to map $(k^{\ell_j})_j$ to $[w]_k$ and note that $f_i(D_i \cap (k^\N)^r) = [L_i]_k$.

    It remains to construct $D$ and $f$ yielding all of $X$. Note that the value of $r$ may depend on $i$, so henceforth we write $r_i$ for the arity of $D_i, f_i$. Assume $X$ is nonempty (otherwise, take $D = \varnothing$), and let $p \in X$ be chosen arbitrarily. Let $r = 1 + \max_i r_i$, and define:

    $$D = (D_1 \times \N^{r - r_1 - 1} \times \{k^1\}) \cup \dots \cup (D_m \times \N^{r - r_m - 1} \times \{k^m\})$$
    $$f(x_1, \dots, x_r) = \begin{cases}
        f_1(x_1, \dots, x_{r_1}) & \text{if } x_r = k^1 \\
        f_2(x_1, \dots, x_{r_2}) & \text{if } x_r = k^2 \\
        \vdots & \vdots \\
        f_m(x_1, \dots, x_{r_m}) & \text{if } x_r = k^m \\
        p & \text{otherwise.}
    \end{cases}$$

    We observe that $D$ and $f$ are Presburger-definable and that $f(D \cap (k^\N)^r) = X$.
\end{proof}

Note that as promised in Section 2, we obtain the following corollary.

\begin{cor}
\label{sparse_sparse_definable}
    Let $X \subseteq \N^d$ be a sparse $k$-automatic set. Then $X$ is sparse-definable.
\end{cor}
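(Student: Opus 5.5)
The corollary follows almost directly from the implication (I) $\implies$ (II) of Theorem \ref{sparse_equals_power_internal}. First, if $X$ is empty there is nothing to prove, since $\varnothing$ is Presburger-definable. Otherwise, I will invoke (II) to obtain $r > 0$, a Presburger-definable $D \subseteq \N^r$, and a Presburger-definable $f : D \to \N^d$ with $f(D \cap (k^\N)^r) = X$.

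I will then write down an explicit defining formula in $(\N, +, k^\N)$. Let $\varphi_D(\vec{x})$ and $\varphi_f(\vec{x}, \vec{y})$ be Presburger formulas defining $D$ and the graph of $f$. Then
$$\vec{y} \in X \iff \exists x_1 \dots \exists x_r \left( \bigwedge_{j=1}^r k^\N(x_j) \wedge \varphi_D(\vec{x}) \wedge \varphi_f(\vec{x}, \vec{y}) \right),$$
and this is a first-order formula in the language of $(\N, +, k^\N)$. Hence $X$ is sparse-definable.

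There is essentially no obstacle here; all the work was done in the proof of (I) $\implies$ (II). The only point to double-check is that the formula correctly captures the image $f(D \cap (k^\N)^r)$. This holds because the unary predicate $k^\N$ is available in the structure, so the quantifier can be relativized to it. It also remains consistent with Corollary \ref{entropy_dichotomy}(ii), which is the case $d = 1$.
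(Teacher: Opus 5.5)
Your proposal is correct and follows the same route as the paper's proof: invoke (I) $\implies$ (II) of Theorem \ref{sparse_equals_power_internal} and observe that the image $f(D \cap (k^\N)^r)$ is definable in $(\N, +, k^\N)$. Your explicit existential formula simply spells out that observation. Your separate treatment of $X = \varnothing$ is harmless, since the theorem already covers that case by taking $D = \varnothing$.
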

\begin{proof}
    By (II) of Theorem \ref{sparse_equals_power_internal}, there exist $r > 0$, a Presburger-definable $D \subseteq \N^r$, and a Presburger-definable $f : D \to \N^d$ such that $f(D \cap (k^\N)^r) = X$. Observe that the image $f(D \cap (k^\N)^r)$ is definable in $(\N, +, k^\N)$.
\end{proof}

\section{Sparse-definable injections}

We begin this section by defining a partial preorder on the sparse-definable sets.

\begin{defn}
    Let $X, Y$ be sparse-definable sets of any arity. We write $X \preccurlyeq Y$ to indicate that there is a sparse-definable injection $f : X \hookrightarrow Y$. We define $\succcurlyeq, \prec, \succ$ accordingly.
\end{defn}

Note that this is indeed a preorder, because the identity function is sparse-definable, and the composition of sparse-definable injections is a sparse-definable injection.

We establish some basic results on this preorder.

\begin{lem}
\label{prec_powers}
    Let $d \in \N$. Then:
    
    \begin{enumerate}[(i)]
        \item $(k^\N)^d \prec (k^\N)^{d+1}$;
        \item $(k^\N)^d \prec \N$.
    \end{enumerate}
\end{lem}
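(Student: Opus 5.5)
Each strict inequality splits into two tasks: exhibit a sparse-definable injection, and show that no sparse-definable injection goes the other way. The injections are easy. For (i), send $\vec{x} \in (k^\N)^d$ to $(\vec{x}, 1) \in (k^\N)^{d+1}$, which is Presburger-definable and injective. For (ii), it suffices to inject $(k^\N)^d$ into $\N$ in a sparse-definable way. I would send $(k^{e_1}, \dots, k^{e_d})$ to a number whose base-$k$ expansion records the exponents. A convenient choice is to use the canonical encoding of the tuple, shuffled coordinatewise into a single word: $(x_1, \dots, x_d) \mapsto \sum_i x_i^{d} k^{i-1}$. Since $x_i^d$ is not Presburger, the map actually to be used is the interleaving
$$(k^{e_1}, \dots, k^{e_d}) \mapsto \sum_{i=1}^d k^{i-1} \cdot (k^{e_i})^{(d)},$$
where $(k^{e})^{(d)} = k^{de}$.

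The obstacle is that $k^e \mapsto k^{de}$ is not sparse-definable, so a simpler route is needed. Instead, use that $(k^\N)^d$ is sparse and $k$-automatic, and inject it into $\N$ by a $k$-automatic map with sparse image. One could then invoke Corollary \ref{sparse_sparse_definable} to get sparse-definability of the graph, since a graph of a map from a sparse set with linearly bounded values is again sparse; this is checked via Fact \ref{kautomatic_linear_growth}. Concretely, take the map $\vec{x} \mapsto x_1 + x_2 + \dots + x_d$ restricted to tuples with $x_1 > x_2 > \dots > x_d$. This only handles the ordered part, so the domain must be partitioned into finitely many Presburger-definable pieces according to the order type of the tuple, including equalities. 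On each piece, use the sum together with a distinct tag. For example, multiply by $k^{c}$ and add distinct small offsets $t < k^c$ indexing the order type; this separates the pieces, since $\sum x_i$ is a multiple of $k^c$ only after the shift. Injectivity holds because on tuples of distinct powers of $k$, ordered, the sum's base-$k$ digits determine the tuple. Equal coordinates are handled by the tag, which tells us which coordinates coincide, so we sum only the distinct values. All of this is Presburger-definable on $(k^\N)^d$, hence sparse-definable.

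For non-existence, the key tool is growth counting. For (ii), $\N$ is not sparse, while any sparse-definable image of $(k^\N)^d$ is Presburger-definable-from-$k^\N$ and hence should be sparse. To justify this, note that the image of $(k^\N)^d$ under a sparse-definable function is unary and sparse-definable. By Corollary \ref{entropy_dichotomy} it is therefore sparse or of linear growth. The linear case must be excluded by counting. An injection $g : \N \hookrightarrow (k^\N)^d$ that is sparse-definable is in particular $k$-automatic. Its inverse is $k$-automatic, so by Fact \ref{kautomatic_linear_growth} we get $\max g(n) \geq n / C$ up to constants; symmetrically, $g$ itself is linearly bounded. Thus $g$ maps $[0,n]$ injectively into $(k^\N)^d_{\leq Cn}$, which has size $O(\log^d n)$, a contradiction.

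For (i), the same strategy applies. Suppose $h : (k^\N)^{d+1} \hookrightarrow (k^\N)^d$ is sparse-definable, hence $k$-automatic with a linear bound. Then $h$ injects $(k^\N)^{d+1}_{\leq n}$, of size $\Theta(\log^{d+1} n)$, into $(k^\N)^d_{\leq Cn}$, of size $O(\log^d n)$. This is impossible for large $n$. The main points to verify are that sparse-definable functions are $k$-automatic, by B\`es \cite{Bes97} or simply because $k^\N$ is $k$-automatic, and the counting estimate $|(k^\N)^d_{\leq n}| = (\lfloor \log_k n \rfloor + 1)^d$.
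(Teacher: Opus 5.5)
Your proposal is correct and follows the paper's proof. The injection in (ii) is the paper's: tag the order type, including equalities, and add the distinct powers of $k$, scaling by $k^c$ where the paper scales by $|T|$. The non-existence argument in (i) is the same growth count via Fact \ref{kautomatic_linear_growth}.

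The one variation is in showing $\N \not\preccurlyeq (k^\N)^d$. You count directly using the linear bound on $g$, whereas the paper composes $(k^\N)^{d+1} \hookrightarrow \N \hookrightarrow (k^\N)^d$ to contradict (i). Both work. Your false starts (the $k^{de}$ interleaving, the detour through Corollary \ref{sparse_sparse_definable}, the bound on the inverse) can simply be deleted.
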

\begin{proof}
    \,
    \begin{enumerate}[(i)]
        \item The sparse-definable injection from $(k^\N)^d$ into $(k^\N)^{d+1}$ is trivial. Assume toward contradiction the existence of a sparse-definable $f : (k^\N)^{d+1} \hookrightarrow (k^\N)^d$. By Fact \ref{kautomatic_linear_growth}, there is a constant $C$ such that each $(x_1, \dots, x_{d+1}) \in (k^\N)^{d+1}$ with $\max (x_1, \dots, x_{d+1}) \leq n$ gives rise via $f$ to a unique $(y_1, \dots, y_d)$ in $(k^\N)^d$ with $\max (y_1, \dots, y_d) \leq Cn$. Assume without loss of generality that $C = k^c$ for some natural number $c$; then for sufficiently large $i$, the number of elements of $(k^\N)^d$ with coordinates at most $Ck^i = k^{c + i}$, which is $(c+i+1)^d$, is at least equal to the number of elements of $(k^\N)^{d+1}$ with coordinates at most $k^i$, which is $(i+1)^{d+1}$. But $(c+i+1)^d$ cannot possibly exceed $(i+1)^{d+1}$ as $i$ increases.
        \item The case $d = 0$ is trivial. Otherwise, let $T$ be the set of types of $d$-tuples from a totally ordered set, and let $S$ be the set of all sets of at most $d$ powers of $k$. Then a given tuple $\vec{x}$ can be uniquely identified by its corresponding $(t(\vec{x}), s(\vec{x})) \in T \times S$, where $t(\vec{x})$ is the type of the order relations between its coordinates and $s(\vec{x})$ is the set of unique values it contains. The set $T$ is finite; enumerate it via a function $\iota : T \to \{0, \dots, |T| - 1\}$.

        Define $f : (k^\N)^d \to \N$ by letting:
        
        $$f(\vec{x}) = |T| \sum_{k^i \in s(\vec{x})} k^i + \iota(t(\vec{x}))$$
        
        The function $f$ is sparse-definable because it may be defined piecewise for every possible value of $t$. Moreover, from the sum of a set of powers of $k$, we can recover each individual power by examining the base-$k$ expansion of the resulting number, so $f$ is an injection.

        It remains to note that $\N \not\preccurlyeq (k^\N)^d$, because the restriction of such an injection to $(k^\N)^{d+1}$ would violate (i).
    \end{enumerate}
\end{proof}

The other result in this section is a dichotomy result for unary sets.

\begin{lem}
\label{prec_dichotomy}
    Let $X \subseteq \N$ be sparse-definable. Either $X$ is sparse, or $\N \preccurlyeq X$.
\end{lem}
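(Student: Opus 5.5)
We plan to use Semenov's characterization (Fact \ref{semenov_char}), exactly as in the proof of Corollary \ref{entropy_dichotomy}(i). Write $X = [L]_k$ with $L$ a finite union $L^{(1)} \cup \dots \cup L^{(m)}$ of languages of the form $v_1w_1^* \cdots v_rw_r^* \Sigma_{\ell, m, c}$. If every component has $\Sigma_{\ell,m,c}$ finite, then every component is of the form in Fact \ref{sparse_char}, so $L$ is a sparse regular language. Adding $0^*$ prefixes keeps it sparse and regular, so $X$ is sparse by Remark \ref{sparse_iff_sparse}. Otherwise, fix one component $L' = v_1w_1^*\cdots v_rw_r^*\Sigma_{\ell,m,c}$ with $\Sigma_{\ell,m,c}$ infinite. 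Its sublanguage $v \Sigma_{\ell,m,c}$, where $v = v_1\cdots v_r$, encodes a subset $Y \subseteq X$. It then suffices to produce a sparse-definable injection $\N \hookrightarrow Y$.

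To build it, we describe $Y$ arithmetically. A word $vu$ with $u \in \Sigma_{\ell,m,c}$, where $|u| = n\ell$, encodes $[v]_k k^{n\ell} + [u]_k$. Here $[u]_k$ ranges over all $t < k^{n\ell}$ with $t \equiv -c \pmod m$. So
$$Y = \{\, [v]_k \cdot p + t : p \in (k^{\ell})^{\N},\ 0 \le t < p,\ t \equiv -c \pmod m \,\}.$$
The set $(k^\ell)^\N$ is sparse-definable as $\{p \in k^\N : p \equiv 1 \pmod{k^\ell - 1}\}$, since $k^j \bmod (k^\ell - 1)$ depends only on $j \bmod \ell$. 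Hence $Y$ is sparse-definable. If $v$ contains leading zeros, this is harmless, because the same formula still describes $Y$.

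Next we define the injection. Fix a residue $c' \in \{0,\dots,m-1\}$ with $c' \equiv -c \pmod m$, and send $x \in \N$ to $g(x) = [v]_k \cdot p(x) + (mx + c')$. Here $p(x)$ is the least element of $(k^\ell)^\N$ exceeding $mx + c'$. This is definable in $(\N,+,k^\N)$: the condition on $p$ reads ``$p \in (k^\ell)^\N$, $p > mx + c'$, and no $q \in (k^\ell)^\N$ satisfies $mx + c' < q < p$'', or equivalently $k^\ell$ times the largest such $p'$ below. Then $g(x) \in Y$ by construction. The step needing care is injectivity when $[v]_k > 0$. From $g(x)$ we recover $p$ because $[v]_k p \le g(x) < ([v]_k + 1)p$ and distinct elements of $(k^\ell)^\N$ differ by a factor of at least $k^\ell \ge 2$. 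This uniquely determines $p$ provided $([v]_k+1)/[v]_k \le k^\ell$, which always holds since $[v]_k \ge 1$. Having recovered $p$, we subtract to get $mx + c'$ and hence $x$. When $[v]_k = 0$, $g(x) = mx + c'$ is already injective. In either case $g$ is a sparse-definable injection $\N \hookrightarrow Y \subseteq X$, so $\N \preccurlyeq X$.

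The main obstacle is the bookkeeping in Semenov's fact rather than any deep step. Specifically, we must interpret $\Sigma_{\ell,m,c}$ correctly: length exactly a multiple of $\ell$, leading zeros allowed. We must also check that ``infinite'' forces $\ell \ge 1$ and that the congruence class is nonempty at every length $n\ell$. This last point needs $k^{n\ell} \ge m$, which holds for large $n$. To be safe, we will restrict $x$ so that $p(x)$ is large, and otherwise shift $x \mapsto x + N$ for a fixed $N$, which keeps the map an injection.
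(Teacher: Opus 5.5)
Your proof is correct, but it takes a different route from the paper. You read the injection directly off Semenov's normal form (Fact~\ref{semenov_char}). A component with $\ell \geq 1$ contains, for every $n$, the whole residue class of $-c$ modulo $m$ below $k^{n\ell}$, shifted by $[v]_k k^{n\ell}$, and $x \mapsto [v]_k\, p(x) + mx + c'$ lands in it. In effect this is the paper's proof of Corollary~\ref{entropy_dichotomy}(i), upgraded from a counting bound to a definable injection. It needs nothing beyond Semenov's theorem: no Fact~\ref{schulz_definable_dichotomy}, no Theorem~\ref{sparse_equals_power_internal}, and no Fact~\ref{kautomatic_linear_growth}.

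The paper deliberately avoids Semenov's characterization. It uses Theorem~\ref{sparse_equals_power_internal} to check that Presburger images of powers of $k^\N$ are sparse. It then applies Schulz's dichotomy (Fact~\ref{schulz_definable_dichotomy}) to get a thick $X_{m,c}$. Finally it uses the linear bound on $k$-automatic functions to locate, for each $e$, an interval of length $k^{eN}$ in $X_{m,c}$ starting below $Ck^{eN}$, which yields the strictly increasing $g_N$.

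What the paper's route buys is independence from the normal form. That is why the argument after the lemma can be advertised as a \emph{new} proof of Corollary~\ref{entropy_dichotomy}(i), independent of Semenov's theorem; it would no longer be independent if the lemma were proved your way. The paper's argument also uses only ``long progressions with a common difference plus linearly bounded definable functions,'' so it transfers to predicates where no Semenov-type description is known. Your route buys brevity and a completely explicit injection.

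A few small points:

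\begin{enumerate}
\item Injectivity is immediate without the interval-disjointness argument: $p(x)$ is nondecreasing and $mx + c'$ is strictly increasing (as $m \geq 1$), so $g$ is strictly increasing.
\item Your closing worry is moot. The number $t = mx + c' < p(x)$ is itself the witness that the residue class is nonempty at that length, so $g(x) \in Y$ for every $x$ and no shift is needed.
\item The case split is really $\ell \geq 1$ versus $\ell = 0$, with $m \geq 1$ as the paper tacitly assumes. Under the degenerate reading $m = 0$, the language $\Sigma_{\ell,0,0} = (0^\ell)^*$ is infinite but gives nothing.
\item In the sparse case, $0^*L$ need not contain canonical encodings, so Remark~\ref{sparse_iff_sparse} does not apply verbatim. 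The fix: in a DFA for $L$ with $s$ states, any accepted $0^j u$ can be pumped down to some accepted $0^{j'}u$ with $j' < s$. This gives $|X_{\leq k^n - 1}| \leq |L_{\leq n+s}|$. The paper takes the same shortcut in Corollary~\ref{entropy_dichotomy}(i).
\end{enumerate}
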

\begin{proof}
    Note that any Presburger-definable image of $(k^\N)^r$ (in fact any $k$-automatic image thereof) is sparse by Theorem \ref{sparse_equals_power_internal}. Therefore, apply Fact \ref{schulz_definable_dichotomy} with $\calI$ being the class of sparse sets and with $E = k^\N$. If $X$ is not sparse, we obtain that $X$ contains arbitrarily long arithmetic progressions with the same difference. Let $X_{m,c} = \{x \in \N : xm + c \in X\}$. Then some $X_{m,c}$ is thick, i.e. it contains arbitrarily long intervals.

    Define $f : \N \to \N$ as follows: $f(n) = \min \{x \in \N : n \leq x \wedge x, x+1, \dots, x+n \in X_{m,c}\}$. Notice that $f$ is a sparse-definable function and is therefore $k$-automatic. So by Fact \ref{kautomatic_linear_growth}, when $n > 0$ we have $f(n) \leq C n$ for some constant $C$.

    Now for each positive integer $N$, let $e(n)$ for a natural number $n$ be defined such that $k^{e(n) N}$ is the smallest power of $k^N$ exceeding $n$. Define $g_N(n) = f(k^{e(n) N}) + n$. By definition of $f$, we have $g_N(n) \in X_{m,c}$. Moreover, $g_N$ is sparse-definable; the set $\{k^{eN} : e \in \N\}$ can be defined as the set of powers of $k$ congruent to $1$ modulo $k^N - 1$. Note that:

    $$k^{e(n) N} \leq f(k^{e(n) N}) \leq g_N(n) \leq Ck^{e(n) N} + n < Ck^{e(n) N} + k^{e(n) N} = (C+1)k^{e(n) N}$$

    We choose $N$ such that $(C+1) \leq k^N$; then $k^{e(n) N} \leq g_N(n) < k^{(e(n)+1)N}$.

    We claim $g_N$ is strictly increasing (and therefore injective). Let $n_1 < n_2$. If $e(n_1) = e(n_2)$, then:

    \begin{equation*}
        \begin{split}
            g_N(n_1) & = f(k^{e(n_1)N}) + n_1 \\
            & = f(k^{e(n_2)N}) + n_1 \\
            & < f(k^{e(n_2)N}) + n_2 \\
            & = g_N(n_2).
        \end{split}
    \end{equation*}

    If $e(n_1) < e(n_2)$, then:

    \begin{equation*}
        \begin{split}
            g_N(n_1) & < k^{(e(n_1)+1)N} \\
            & \leq k^{e(n_2)N} \\
            & \leq g_N(n_2).
        \end{split}
    \end{equation*}

    The case $e(n_1) > e(n_2)$ is impossible by definition of $e(n)$.

    Lastly, we define $h : \N \to X$ as $h(n) = g_N(n) \cdot m + c$. This is a sparse-definable injection from $\N$ to $X$.
\end{proof}

As a corollary, we obtain a new proof of (i) in Corollary \ref{entropy_dichotomy}.

\begin{proof}
    Let $X \subseteq \N$ be sparse-definable but not sparse. By Lemma \ref{prec_dichotomy}, there is a sparse-definable injection $f : \N \hookrightarrow X$. Apply Fact \ref{kautomatic_linear_growth} to obtain some $C$ such that $f(n) \leq C \max(1, n)$ for all $n$. Therefore, for nonzero $n$, $|X_{\leq Cn}| \geq |\N_{\leq n}| = n + 1$. Hence $X$ has a growth function in $\Theta(n)$.
\end{proof}

\section{Sparse-definability vs. definability in $\calE$}

We will prove Theorem \ref{sublinear_e_definable} in this section. Fix some expansion $\calE$ of $(\N, +, k^\N)$ such that every unary set definable in $\calE$ is sparse-definable. We establish several results about the definable sets in $\calE$.

\begin{lem}
\label{sparse_e_definable}
    Let $X \subseteq \N^d$ be a subset of a sparse $k$-automatic set $Y$. If $X$ is definable in $\calE$, then $X$ is sparse-definable.
\end{lem}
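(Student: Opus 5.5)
Since $Y$ is a sparse $k$-automatic set, Theorem \ref{sparse_equals_power_internal} (II) gives $r>0$, a Presburger-definable $D\subseteq\N^r$ and a Presburger-definable $f:D\to\N^d$ with $f(D\cap(k^\N)^r)=Y$. The plan is to pull $X$ back along $f$ to a subset of $(k^\N)^r$, encode that subset by a unary set, and then use the hypothesis on $\calE$.

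First I will make $f$ injective on $(k^\N)^r$. Choose, for each $\vec y\in Y$, the lexicographically least $\vec x\in D\cap(k^\N)^r$ with $f(\vec x)=\vec y$. This selection is definable in $(\N,+,k^\N)$. Replace $D\cap(k^\N)^r$ by the set $D^\ast$ of these chosen preimages, which is sparse-definable. Then $f|_{D^\ast}:D^\ast\to Y$ is a sparse-definable bijection.

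Next, set $Z=(f|_{D^\ast})^{-1}(X)\subseteq D^\ast\subseteq(k^\N)^r$. Since $f$ is sparse-definable, hence definable in $\calE$, and $X$ is definable in $\calE$, the set $Z$ is definable in $\calE$. Now apply the injection $\iota:(k^\N)^r\hookrightarrow\N$ from Lemma \ref{prec_powers}(ii). Recall that it sends $\vec x$ to $|T|\sum_{k^i\in s(\vec x)}k^i+\iota(t(\vec x))$. This map is sparse-definable, so $\iota(Z)\subseteq\N$ is a unary set definable in $\calE$. By the hypothesis on $\calE$, $\iota(Z)$ is sparse-definable.

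Finally, $Z=\iota^{-1}(\iota(Z))\cap(k^\N)^r$ is sparse-definable, because $\iota$ is a sparse-definable function. Hence $X=f(Z)$ is sparse-definable, being the image of a sparse-definable set under a Presburger-definable map.

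The step needing the most care is checking that $\iota$ is genuinely definable in $(\N,+,k^\N)$, so that its inverse image can be taken inside $(k^\N)^r$. The paper asserts this piecewise by order type: on each type, $\iota$ is a fixed linear combination of the coordinates plus a constant, and the order type of a tuple is Presburger-definable. The other steps are routine closure properties, because $\calE$ expands $(\N,+,k^\N)$.
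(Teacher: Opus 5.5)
Your proof is correct and is essentially the paper's argument. You pull $X$ back along the Presburger map $f$ from Theorem \ref{sparse_equals_power_internal}(II) into $(k^\N)^r$, code that subset as a unary $\calE$-definable set via the injection of Lemma \ref{prec_powers}(ii), apply the hypothesis on $\calE$, and push forward again. The lexicographic-least step that makes $f$ injective is harmless but unnecessary, since $X\subseteq Y$ already gives $f(f^{-1}(X)\cap D\cap(k^\N)^r)=X$. You should also rename your injection, because $\iota$ already denotes the enumeration of order types in its defining formula.
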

\begin{proof}
    By Theorem \ref{sparse_equals_power_internal}, there exist $r > 0$ and Presburger-definable $D \subseteq \N^r$ and $f : D \to \N^d$ such that $f(D \cap (k^\N)^r) = Y$. By Lemma \ref{prec_powers}, there exists a sparse-definable injection $g : (k^\N)^r \hookrightarrow \N$. Define:

    $$X' = \{n \in \N : n \in g((k^\N)^r) \wedge g^{-1}(n) \in D \wedge f(g^{-1}(n)) \in X\}$$

    Clearly $X'$ is definable in $\calE$. Because $X'$ is unary, it is sparse-definable. Then:

    $$X = \{\vec{x} \in \N^d : \exists n \in X' \: (f(g^{-1}(n)) = \vec{x})\}$$

    demonstrates that $X$ is sparse-definable.
\end{proof}

\begin{lem}
\label{power_function_e_definable}
    Let $r > 0$, and let $D' \subseteq (k^\N)^r$ and $f : D' \to \N$ be definable in $\calE$. Then $D'$ and $f$ are sparse-definable.
\end{lem}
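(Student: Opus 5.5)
The claim about $D'$ is immediate. $D' \subseteq (k^\N)^r$, and $(k^\N)^r$ is a sparse $k$-automatic set: it is encoded by a finite union of languages of the shape in Fact \ref{sparse_char}. So Lemma \ref{sparse_e_definable} applies to $D'$ directly and shows it is sparse-definable. All the work is in the function $f$. Its graph $\Gamma_f \subseteq (k^\N)^r \times \N$ is itself sparse, since each $\vec{x}$ contributes one point and $|(\Gamma_f)_{\leq n}| \leq |D'_{\leq n}| \in O(\log^r n)$. But Lemma \ref{sparse_e_definable} needs a sparse $k$-automatic \emph{superset}, so the plan is to manufacture one.

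The first step is to use unary information. The image $f(D')$ is a unary set definable in $\calE$, hence sparse-definable. If $f(D')$ is sparse, then $(k^\N)^r \times f(D')$ is sparse-definable and sparse. I would like to conclude that it is $k$-automatic by Corollary \ref{entropy_dichotomy}(ii), but that corollary needs $f(D')$ to be sparse \emph{and} $k$-automatic. Every sparse-definable set is $k$-automatic, because $k^\N$ is $k$-automatic, so this holds. Then $\Gamma_f$ sits inside a sparse $k$-automatic set and Lemma \ref{sparse_e_definable} finishes this case. So everything reduces to replacing $f$ by an interdefinable function (over $(\N,+,k^\N)$) whose image is sparse.

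To do this I would peel off base-$k$ structure using powers of $k$, which are available. Let $P(y)$ denote the largest power of $k$ that is at most $y$; it is sparse-definable. Consider the map $\vec{x} \mapsto (\vec{x}, P(f(\vec{x})))$, whose graph lies in $(k^\N)^{r+1}$. By Lemma \ref{sparse_e_definable} this graph is sparse-definable. Next I would split $D'$ into sparse-definable pieces according to the order type of the coordinates of $\vec{x}$ together with $P(f(\vec{x}))$. On each piece I would try to encode the pair $(\vec{x}, f(\vec{x}))$ by a single natural number via a sparse-definable map that is injective on that piece. For example, when $f(\vec{x}) < \min_i x_i$, take $f(\vec{x})$ plus a sum of the distinct coordinates placed above it, as in the proof of Lemma \ref{prec_powers}(ii), with the order-type index attached through a residue. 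The image of $\Gamma_f$ under such an encoding is a unary set definable in $\calE$, hence sparse-definable. Decoding then shows the piece of $\Gamma_f$ is sparse-definable.

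The main obstacle is the pieces where $f(\vec{x})$ exceeds some coordinates of $\vec{x}$. There the low digits of $f(\vec{x})$ overlap the positions needed to record $\vec{x}$. A single global injection $(k^\N)^r \times \N \hookrightarrow \N$ cannot exist, by counting against Fact \ref{kautomatic_linear_growth}. So the encoding must exploit that $\Gamma_f$ is a graph, not all of $(k^\N)^r \times \N$. What I would try first is an induction on $r$ that handles the coordinates of $\vec{x}$ below $P(f(\vec{x}))$ through the remainder function $\vec{x} \mapsto f(\vec{x}) - P(f(\vec{x}))$. This function takes values below $P(f(\vec{x}))$, and $P(f(\vec{x}))$ can be treated as an extra power-of-$k$ coordinate. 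The hope is to iterate until the value sits below all the power coordinates, where the encoding above applies. Making this iteration terminate, presumably by showing only boundedly many leading digits need separating on a co-sparse part, is where I expect the real difficulty to lie.
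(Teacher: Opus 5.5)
Your handling of $D'$ and of the case where $f(D')$ is sparse is correct, and it is exactly what the paper does. The gap is the other case, which you never close. The digit-peeling iteration you propose has no termination argument, and the naive version does not terminate in boundedly many steps. Take $k=2$ and $f(x_1,x_2)=x_2-1$ on $\{x_1<x_2\}$. Each subtraction of $P(\cdot)$ removes only one binary digit, so the number of steps needed to push the value below $x_1$ is unbounded, and the arity grows at every step. As written, the proposal does not prove the lemma.

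The missing idea is that no encoding is needed, because the non-sparse case cannot occur. The paper argues as follows.
\begin{enumerate}
  \item If $f(D')$ is not sparse, Lemma \ref{prec_dichotomy} gives a sparse-definable injection $g:\N\hookrightarrow f(D')$.
  \item Let $h(n)$ be the lexicographically least $\vec x\in D'$ with $f(\vec x)=n$, and let $j:(k^\N)^{r+1}\hookrightarrow\N$ be the injection of Lemma \ref{prec_powers}(ii).
  \item Then $h\circ g\circ j$ is an $\calE$-definable injection $(k^\N)^{r+1}\hookrightarrow(k^\N)^r$. Its graph lies in the sparse $k$-automatic set $(k^\N)^{2r+1}$.
  \item Lemma \ref{sparse_e_definable} therefore makes this injection sparse-definable, contradicting Lemma \ref{prec_powers}(i).
\end{enumerate}

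Your own observation that $\vec x\mapsto P(f(\vec x))$ is sparse-definable would also have finished the job. Restrict it to the set of lexicographically least preimages; there $f$ is injective, so its fibres are finite. Hence the largest coordinate in the fibre over $k^m$ is a sparse-definable function of $k^m$, and by Fact \ref{kautomatic_linear_growth} it is at most $Ck^m$. This gives $|f(D')\cap[k^m,k^{m+1})|=O(m^r)$. Summing over $m$ contradicts the $\Theta(n)$ growth that Corollary \ref{entropy_dichotomy}(i) forces on a non-sparse, sparse-definable unary set.
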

\begin{proof}
    The set $D'$ is sparse-definable by Lemma \ref{sparse_e_definable}. Moreover, the image $f(D')$ is a unary $\calE$-definable set, so it is sparse-definable. If $f(D')$ is sparse, then the graph of $f$ is a subset of the sparse $k$-automatic set $(k^\N)^r \times f(D')$ and is therefore sparse-definable by Lemma \ref{sparse_e_definable}. Otherwise, by Lemma \ref{prec_dichotomy}, we have some injection $g : \N \hookrightarrow f(D')$.
    
    For $n \in f(D')$, define $h(n)$ to be the lexicographically least $\vec{x} \in D'$ with $f(\vec{x}) = n$. Let $j : (k^\N)^{r+1} \hookrightarrow \N$ be the sparse-definable injection given by Lemma \ref{prec_powers}. Then $F = h \circ g \circ j$ is an injection from $(k^\N)^{r+1}$ into $D' \subseteq (k^\N)^r$.

    Now, $F$ is definable in $\calE$ by inspection, and its graph is a subset of the sparse $k$-automatic set $(k^\N)^{2r+1}$. By Lemma \ref{sparse_e_definable}, $F$ is sparse-definable. But this is now a sparse-definable injection $(k^\N)^{r+1} \hookrightarrow (k^\N)^r$, contradicting Lemma \ref{prec_powers}.
\end{proof}

\begin{lem}
\label{edefinable_linear_growth}
    Let $f : \N^r \to \N^d$ be a function definable in $\calE$. Then there exists some $C$ such that whenever $f(x_1, \dots, x_r) = (y_1, \dots, y_d)$, we have $\max(y_1, \dots, y_d) \leq C \max(1, x_1, \dots, x_r)$.
\end{lem}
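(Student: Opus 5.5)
We reduce to a statement about a single unary $\calE$-definable function and then use Lemma \ref{power_function_e_definable}. First, it suffices to treat $d = 1$. Replacing $f$ by each coordinate $\pi_a \circ f$ gives $d$ functions $\N^r \to \N$, each definable in $\calE$, and we may take the maximum of the resulting constants. Next, we reduce the domain to $\N$. Define $F : \N \to \N$ by
$$F(n) = \max\{ f(\vec{x}) : \vec{x} \in \N^r, \max(x_1, \dots, x_r) \leq n \}.$$
This maximum exists because the set is finite, and $F$ is clearly definable in $\calE$. It is enough to find $C$ with $F(n) \leq C \max(1, n)$ for all $n$.

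The key step is to restrict $F$ to powers of $k$. Let $G : k^\N \to \N$ be the restriction $G = F|_{k^\N}$. Its domain is a subset of $(k^\N)^1$ and $G$ is $\calE$-definable. By Lemma \ref{power_function_e_definable} (with $r = 1$), $G$ is sparse-definable. In particular its graph is definable in $(\N, +, k^\N)$, which is a reduct of $(\N, +, V_k)$, so $G$ is $k$-automatic. Fact \ref{kautomatic_linear_growth} applies to $k$-automatic functions whose domain is a subset of $\N$; its pumping proof never uses totality. Alternatively, we can extend $G$ by $0$ off $k^\N$, which keeps it $k$-automatic. Either way we get a constant $C_0$ with $G(k^i) \leq C_0 k^i$ for all $i$.

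Finally, monotonicity transfers the bound from powers of $k$ to all of $\N$. $F$ is nondecreasing by construction. For $n \geq 1$, let $k^i$ be the least power of $k$ with $n \leq k^i$, so that $k^i < kn$ (or $k^i = 1 = n$). Then
$$F(n) \leq F(k^i) = G(k^i) \leq C_0 k^i \leq k C_0 n.$$
The case $n = 0$ is handled by enlarging the constant to cover $F(0)$. Taking $C = \max(k C_0, F(0))$ finishes the proof.

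The main obstacle is the step of passing to powers of $k$. We cannot apply Lemma \ref{power_function_e_definable} directly to $f$, since its domain is $\N^r$ rather than a subset of $(k^\N)^r$. Making $F$ monotone is what lets us sample only at powers of $k$ without losing the bound.
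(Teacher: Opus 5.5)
Your proof is correct and follows essentially the same route as the paper, which also defines the function sending $k^n$ to the maximum coordinate of $f$ over the box $[0,k^n]^r$, applies Lemma \ref{power_function_e_definable}, extends by zero to invoke Fact \ref{kautomatic_linear_growth}, and rounds $\max(x_1,\dots,x_r)$ up to the next power of $k$ to obtain $C = C'k$. The differences are cosmetic: your coordinatewise reduction to $d=1$, and your separate treatment of $n=0$, which is already covered by monotonicity since $F(0) \le F(1) = G(1) \le C_0$.
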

\begin{proof}
    Define $g : k^\N \to \N$ as follows: $g(k^n)$ is the maximum coordinate of any $f(\vec{x})$ with $\vec{x}$ having maximum coordinate at most $k^n$. This function is definable in $\calE$, so by the previous lemma, it is sparse-definable. Extend $g$ to a function $g^* : \N \to \N$ by mapping all non-powers to zero; this is still sparse-definable. By Fact \ref{kautomatic_linear_growth}, we have a linear bound $g^*(k^n) = g(k^n) \leq C' k^n$.

    Let $f(x_1, \dots, x_r) = (y_1, \dots, y_d)$, and let $k^n$ be the smallest power of $k$ that is at least equal to $\max(x_1, \dots, x_r)$. Then:

    \begin{equation*}
        \begin{split}
             \max(y_1, \dots, y_d) & \leq g(k^n) \\
             & \leq C' k^n \\
             & \leq C' k \max(1, x_1, \dots, x_r).
        \end{split}
    \end{equation*}

    Therefore, we need only let $C = C' k$.
\end{proof}

We now prove the main theorem.

\begin{thm}
\label{sublinear_e_definable}
    Let $X \subseteq \N^d$ be definable in $\calE$, and assume that $|X_{\leq n}| \in o(n)$. Then $X$ is sparse-definable.
\end{thm}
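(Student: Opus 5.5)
The plan is to show that $X$ is contained in a sparse $k$-automatic set and then apply Lemma \ref{sparse_e_definable}. Since $X$ itself is not known to be $k$-automatic, I will not bound $X$ directly. Instead I will build an $\calE$-definable map from a power of $k^\N$ onto $X$, and use Lemma \ref{power_function_e_definable} to make that map sparse-definable.

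Step 1: an $\calE$-definable parametrisation by powers of $k$. Order $X$ lexicographically and let $c(\vec x) = |\{\vec y\in X : \vec y <_{\mathrm{lex}} \vec x\}|$ be the rank of $\vec x$ in $X$. The function $c$ is not obviously first-order definable, so I will replace exact counting with a definable coding. For each $n$, the finite set $X_{\leq n}$ has $o(n)$ elements. I want an $\calE$-definable injection $\phi: X\hookrightarrow \N$ with $\phi(\vec x)$ small relative to $\max\vec x$. Then $\phi(X)$ is a unary $\calE$-definable set, hence sparse-definable. By Corollary \ref{entropy_dichotomy}(i), $\phi(X)$ is either sparse or has $\Theta(n)$ growth. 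If $\phi$ satisfies the linear bound of Lemma \ref{edefinable_linear_growth} in the inverse direction, meaning $\max\vec x\le C\max(1,\phi(\vec x))$, then $|\phi(X)_{\le n}|\le |X_{\le Cn}| = o(n)$. This excludes the $\Theta(n)$ case, so $\phi(X)$ is sparse. By Corollary \ref{entropy_dichotomy}(ii) it is sparse $k$-automatic, hence sparse-definable.

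Step 2: constructing $\phi$. The naive choice is $\phi(\vec x)=x_1$ or $\max \vec x$, but these need not be injective. Sublinear growth forces the fibres to be small on average, but not uniformly. So I will first handle the unary projections. The set $M=\{\max\vec x : \vec x\in X\}$ is unary and $\calE$-definable, with $|M_{\le n}|\le|X_{\le n}|=o(n)$. Hence $M$ is sparse by the dichotomy, and therefore sparse $k$-automatic. Likewise each coordinate projection $\pi_i(X)$ is unary with at most $|X_{\le n}|$ points below $n$, so it is sparse as well. Then $X\subseteq \pi_1(X)\times\cdots\times\pi_d(X)$, and a finite product of sparse $k$-automatic sets is sparse and $k$-automatic. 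Lemma \ref{sparse_e_definable} then applies directly. This observation makes the elaborate $\phi$ unnecessary, so the whole proof reduces to the projection argument.

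Final plan in order. First, for each $i$, note that $\pi_i(X)$ is $\calE$-definable and unary, hence sparse-definable. Second, observe that $|\pi_i(X)\cap[0,n]|\le|X_{\le n}|$ only when preimages can be chosen inside $[0,n]^d$, which fails in general. So the main obstacle is bounding the growth of $\pi_i(X)$. To fix this, I will use the function $g(x)=$ the lexicographically least $\vec y\in X$ with $\pi_i(\vec y)=x$. This $g$ is $\calE$-definable, and extending it by $0$ off $\pi_i(X)$ gives a total $\calE$-definable function $\N\to\N^d$. Lemma \ref{edefinable_linear_growth} then gives $\max g(x)\le C\max(1,x)$. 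Consequently $|\pi_i(X)_{\le n}|\le|X_{\le Cn}|+1=o(n)$. By Corollary \ref{entropy_dichotomy}(i) this forces $\pi_i(X)$ to be sparse, and by (ii) it is sparse $k$-automatic. Third, $Y=\prod_i\pi_i(X)$ is sparse, since its growth is bounded by a product of polylogarithms, and it is $k$-automatic. Since $X\subseteq Y$, Lemma \ref{sparse_e_definable} shows that $X$ is sparse-definable.
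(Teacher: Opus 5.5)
Your final plan is correct and follows the paper's proof step for step. You use the lexicographically least section $g$ and Lemma~\ref{edefinable_linear_growth} to get $|\pi_i(X)_{\le n}|\le |X_{\le Cn}|\in o(n)$, then the dichotomy of Corollary~\ref{entropy_dichotomy}(i) to make each $\pi_i(X)$ sparse, and finally Lemma~\ref{sparse_e_definable} applied to $\prod_i \pi_i(X)$.

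One small citation fix: $\pi_i(X)$ is $k$-automatic because every sparse-definable set is $k$-automatic ($k^\N$ is definable in $(\N,+,V_k)$). It does not come from Corollary~\ref{entropy_dichotomy}(ii), which states the converse.
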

\begin{proof}
    Let $X \subseteq \N^d$ be definable in $\calE$, and assume that $|X_{\leq n}| \in o(n)$.

    Choose some $1 \leq i \leq d$, and consider the projection $\pi_i(X)$. Define $f : \N \to \N^d$ as follows: if $n \in \pi_i(X)$, then $f(n)$ is the lexicographically least element of $X$ with $n$ as its $i$th coordinate; otherwise we set $f(n) = \vec{0}$. By Lemma \ref{edefinable_linear_growth}, the coordinates of $f(n)$ do not exceed $C \max (1, n)$ for some $C$. Assume $n \geq 1$; then for any $y \in \pi_i(X)$ with $y \leq n$, there exists a $\vec{x} \in X$ with $\pi_i(\vec{x}) = y$ with coordinates at most $Cn$. Therefore, for nonzero $n$, $|X_{\leq Cn}| \geq |(\pi_i(X))_{\leq n}|$; in particular, the latter expression is also in $o(n)$.

    Note that $\pi_i(X)$ is unary and definable in $\calE$, so it is sparse-definable. By Corollary \ref{entropy_dichotomy}, $\pi_i(X)$ is either sparse or has $\Theta(n)$ growth function, but its growth function is $o(n)$. So $\pi_i(X)$ must be sparse for each $i$. Therefore $X$ is a subset of the sparse $k$-automatic set $\pi_1(X) \times \dots \times \pi_d(X)$. We have $X$ sparse-definable by Lemma \ref{sparse_e_definable}.
\end{proof}

\bibliographystyle{plain}
\bibliography{biblio}

\end{document}